\documentclass{amsart}
% \def\volno{18}
% \def\yrno{2011}
% \def\issueno{00}
% \setcounter{page}{10001}
% \def\lpageno{100NN} %temporary numbers
% %% replace NN with the pagelength of the paper%%
% \overfullrule=5pt

\usepackage{longtable}
\usepackage{tikz-cd}
\usepackage{amsmath,amssymb,amscd,amsthm,amsfonts}
% for commutative diagram
\usepackage[all,cmtip]{xy} 
\usepackage{hyperref}
\usepackage [numbers] {natbib}
\usepackage{enumerate}

\newtheorem{thm}{Theorem}[section]
\newtheorem{lemma}[thm]{Lemma}
\newtheorem{prop}[thm]{Proposition}
\newtheorem{cor}[thm]{Corollary} 
\theoremstyle{definition}
\newtheorem{defn}[thm]{Definition} 
\theoremstyle{remark}

\newtheorem{rmk}[thm]{Remark}
\numberwithin{equation}{section}

% %upper half plane
\newcommand{\uhp}{\mathcal{H}}
% mathfrak

\newcommand{\fP}{\mathfrak{P}}

\newcommand{\fa}{\mathfrak{a}}

\newcommand{\fc}{\mathfrak{c}}

\newcommand{\fn}{\mathfrak{n}}

\newcommand{\fp}{\mathfrak{p}}

% mathbb
\renewcommand{\AA}{\mathbb{A}}
\newcommand{\CC}{\mathbb{C}}

\newcommand{\PP}{\mathbb{P}}
\newcommand{\QQ}{\mathbb{Q}}
\newcommand{\RR}{\mathbb{R}} 
\newcommand{\ZZ}{\mathbb{Z}}

% single character commands for mathbb are deprecated.
\newcommand{\A}{\mathbb{A}}
\newcommand{\F}{\mathbb{F}}
\newcommand{\Q}{\mathbb{Q}}
\newcommand{\R}{\mathbb{R}} 
\newcommand{\Z}{\mathbb{Z}}

% mathcal
\newcommand{\cA}{\mathcal{A}}

\newcommand{\cH}{\mathcal{H}}
\newcommand{\cI}{\mathcal{I}}
\newcommand{\cL}{\mathcal{L}}
\newcommand{\cM}{\mathcal{M}}
\newcommand{\cN}{\mathcal{N}}
\newcommand{\cO}{\mathcal{O}}

% mathrm

% %projective space
% \newcommand{\PS}{\mathbb{P}}

% \newcommand{\idealA}{\mathfrak{A}}\newcommand{\idealn}{\mathfrak{n}}
% \newcommand{\idealp}{\mathfrak{p}} \newcommand{\idealm}{\mathfrak{m}}
% % to denote categories
% \newcommand{\catA}{\mathcal{A}}
% \newcommand{\catB}{\mathcal{B}}
% \newcommand{\catL}{\mathcal{L}}

\renewcommand{\c}[1]{\mathcal{#1}}

\newcommand{\til}{\widetilde}
%transpose of a matrix
\newcommand{\trpz}[1]{{{}^{\mathrm{t}}\negthinspace{#1}}}

\DeclareMathOperator{\Cl}{\mathrm{Cl}}

\DeclareMathOperator{\coker}{\mathrm{coker}}

\DeclareMathOperator{\GL}{\mathrm{GL}}
\DeclareMathOperator{\SL}{\mathrm{SL}}

\DeclareMathOperator{\PGL}{\mathrm{PGL}}
\DeclareMathOperator{\PSL}{\mathrm{PSL}}
\DeclareMathOperator{\tr}{\mathrm{tr}}

\DeclareMathOperator{\Hom}{\mathrm{Hom}}
\DeclareMathOperator{\End}{\mathrm{End}}
\DeclareMathOperator{\Gal}{\mathrm{Gal}}

\DeclareMathOperator{\Lie}{\mathrm{Lie}}
\DeclareMathOperator{\Mat}{\mathrm{M}}

\newcommand{\isom}{\cong}

\DeclareMathOperator{\norm}{\mathbf{N}}

\DeclareMathOperator{\NS}{\mathrm{NS}}

\DeclareMathOperator{\vol}{\mathrm{vol}}

\DeclareMathOperator{\lmod}{\backslash}

%Don't want that twisted Re

%Fourier-Jacobi coefficients

\newcommand{\lsup}[2]{{{}^{#1}\negmedspace{#2}}}
\newcommand{\smatrix}[4]{\ensuremath\bigl( \begin{smallmatrix}
#1&#2\\ #3&#4
\end{smallmatrix} \bigr)}
\newcommand{\smat}[4]{\ensuremath\bigl( \begin{smallmatrix}
#1&#2\\ #3&#4
\end{smallmatrix} \bigr)}
\newcommand{\bmat}[4]{\ensuremath
  \begin{pmatrix}
    #1&#2\\ #3&#4
  \end{pmatrix}}

\newcommand{\splt}{\mathrm {split}}
\newcommand{\ram}{\mathrm {ram}}
\numberwithin{equation}{section}
\title{Virtual Abelian Varieties of {$\GL_2$}-type}
\author{Chenyan Wu}
\address{Shanghai Center for Mathematical Sciences\\ Fudan University\\ 220 Handan Rd, Shanghai, China, 200433 and School of Mathematics and Statistics\\ University of Melbourne\\ VIC 3010, Australia}
\email{chenyan.wu@unimelb.edu.au}
\thanks{This research is supported in part by the National Natural Science Foundation of China (\#11601087), by the Program of Shanghai Academic/Technology Research Leader (\#16XD1400400) and by the General Program of National Natural Science Foundation of China (\#11771086).}
\date{\today}
 
\begin{document}

\begin{abstract}
  This paper studies a class of Abelian varieties that are of $\GL_2$-type and with isogenous classes defined over a number field $k$. We treat the cases when their endomorphism algebras are either (1) a totally real field $K$ or (2) a totally indefinite quaternion algebra over a totally real field $K$. Among the isogenous class of such an Abelian variety, we identify one whose Galois conjugates can be described in terms of actions of Atkin-Lehner operators and the class group of $K$. Thus we deduce that such Abelian varieties are parametrised by finite quotients of certain PEL Shimura varieties. These new families of moduli spaces are further analysed when they are of dimension $2$. We provide explicit numerical bounds for when they are surfaces of general type. In addition, for two particular examples, we show that they are both rational surfaces by computing the coordinates of inequivalent elliptic points and studying the intersections of Hirzebruch cycles with exceptional divisors.
\end{abstract}
\maketitle{}

\section*{Introduction}
\label{sec:introduction}
In \cite{MR1212980}, Ribet considered the modularity problem of  elliptic curves defined over
$\bar {\QQ}$ whose $\Gal (\bar {\QQ}/\QQ)$-conjugates are all isogenous to each other.  The case of
CM elliptic curves with such a property was first studied by Gross\cite{MR563921} who coined the
name, $\QQ$-curve. Ribet showed that given a non-CM $\QQ$-curve $C$ there exists a simple Abelian variety
$A$ of $\GL_2$-type (Definition~\ref{defn:GL-2-type})  defined over
$\QQ$ having $C$ as a $\bar {\QQ}$-quotient. Let $E$ denote the endomorphism algebra of $A$. It must be a totally real number field in this case. Recall that the Tate-$\ell$-module $V_\ell A$ of $A$ is
free of rank $2$ over $E\otimes_\QQ \QQ_\ell$. Let $\lambda$ be a prime of $E$ lying above $\ell$
and set $V_\lambda A = V_\ell A \otimes_{E\otimes_\QQ \QQ_\ell} E_\lambda$. Then the Galois action
of $\Gal (\bar {\QQ}/\QQ)$ on $A$ gives rise to a $\lambda$-adic representation of degree $2$. The
question whether a $\QQ$-curve is modular reduces to showing the modularity of the $\lambda$-adic representations
associated to $A$. An affirmative answer is furnished by Serre's conjecture on mod-$\ell$-representations of $\Gal (\bar{\QQ}/\QQ)$, which
was  proved by Khare-Wintenberger\cite{MR2480604}. Thus Abelian varieties of $\GL_2$-type, as well as their geometric quotients, are of particular interest in the study of modularity. 

Let $k$ be a number field. Instead of $\QQ$-curves, one may as well consider Abelian varieties $B$
over $\bar {k}$ whose $\Gal (\bar {k}/k)$-conjugates are isogenous to $B$. They are the object of
study in this paper in which we generalise several pieces of related work, in the hope that a
version of Serre's conjecture on modularity of mod-$\ell$-representations of $\Gal (\bar{k}/k)$
becomes available in the future. Our main objective is to construct  moduli spaces for these Abelian varieties. We study Galois orbits of the Abelian varieties and relate them to the orbits under the actions of class group of the centre of endomorphism algebra and  Atkin-Lehner operators.  Much of the difficulty we encounter arises from having more complex structure of endomorphism algebras and from having fields with non-trivial class groups. We also estimate the Chern numbers of the moduli spaces and compute two examples. This can potentially lead to explicit examples of Abelian varieties which may provide a test ground for the many conjectures involving Abelian varieties, for example, the BSD conjecture.   We will now review the literature and further discuss our results.

Whereas all elliptic curves are automatically of $\GL_2 (\QQ)$-type, there are many more possibilities for the
endomorphism algebras of Abelian varieties. We will focus on the sub-maximal case, namely (non-CM)
Abelian varieties of $\GL_2$-type. Even after this restriction there are still two possibilities
which are commonly known as the case of real multiplication (RM) and the case of quaternionic
multiplication (QM)
(c.f. Proposition~\ref{prop:endom-alg-of-simple-av-GL2-with-analytic-rep-restriction-or-defined-over-R}).  For
ease of exposition, we first define $k$-virtuality which captures the notion of having isogenous
Galois conjugates.
\begin{defn}
  An Abelian variety $B$ over $\bar {k}$ is said to be $k$-virtual if for all
  $\sigma\in \Gal (\bar {k} /k)$, there exists an isogeny
  $\mu_\sigma: \lsup{\sigma}{B} \rightarrow B$ such that for all $\alpha\in \End(B)$, we have
  $\alpha \circ \mu_\sigma = \mu_\sigma\circ \lsup{\sigma}{\alpha} $.
\end{defn}
For non-CM elliptic curves, the requirement on compatibility with endomorphism ring is vacuous
and in fact, $\QQ$-curve is a short hand for $\QQ$-virtual elliptic curve. The departure from the
more traditional nomenclature is to clarify that the Abelian varieties are only `virtually' defined
over $k$ rather than truly defined over $k$.

In \cite{MR2058652}, Pyle extended the result on relation between $\QQ$-virtual elliptic curves and
Abelian varieties over $\QQ$ of $\GL_2$-type to that on  relation between $\QQ$-virtual Abelian varieties of
$\GL_2$-type and Abelian varieties over $\QQ$ of $\GL_2$-type and in \cite{MR2916969}, Guitart  generalised the result to that over arbitrary number field, but he only considered geometric quotients of the  Ribet-Pyle varieties which have number fields as endomorphism algebra. We first establish an analogous result when the Abelian variety of $\GL_2$-type has QM. We actually prove the result in a  uniform way for both RM and QM case (c.f. Cor.~\ref{cor:simple-factor-of-GL2AV-is-k-virtual-GL2AV}). This shows that the study of virtual Abelian varieties of $\GL_2$-type can be transferred to Abelian varieties of $\GL_2$-type and vice versa.

We also extend Elkies's work\cite{MR2058644} on the construction of moduli spaces of $k$-virtual
elliptic curves to the case of $k$-virtual Abelian varieties of $\GL_2$-type. For each prime $\ell$
of $\QQ$ (the endomorphism algebra of a non-CM elliptic curve $C$), Elkies associated an $\ell$-local tree
to $C$ where, roughly speaking, the vertices represent isomorphism classes of elliptic curves and
the edges represent primitive $\ell$-isogenies. Applying a graph theoretic argument, he showed that
in the isogenous class of $C$ there exists one elliptic curve $C_0$ whose Galois conjugates are
controlled by a certain level structure on $C_0$. The observation is that the Galois orbit of $C_0$,
which is, a priori, difficult to describe, is actually contained in  its Atkin-Lehner orbit. Thus
the moduli spaces are Atkin-Lehner quotients of  modular curves with certain level structure and the $k$-rational points
give rise to $k$-virtual elliptic curves.  For simple $k$-virtual Abelian varieties $B$ of
$\GL_2$-type and for a prime $\lambda$ of the centre $K$ of the endomorphism algebra of $B$, we can
construct a $\lambda$-local tree in an analogous way except that the class group of $K$ now plays a
subtle role. Again our formulation treats both the RM and QM cases largely uniformly. We show in
Theorems~\ref{thm:moduli_of_virtual_av_E}, \ref{thm:moduli_of_virtual_av_D} that the moduli spaces
are quotients of PEL Shimura varieties by the group which is an extension of the Atkin-Lehner group by the class group of $K$ and the $k$-rational points
give rise to $k$-virtual Abelian varieties of $\GL_2$-type. We note that Guitart and Molina\cite{MR2588775} worked out the moduli spaces of virtual QM Abelian surfaces and showed that they are Atkin-Lehner quotients of Shimura curves. As the centre of the endomorphism algebra in their case is $\QQ$ which has class number $1$, our case is much more complicated.

The moduli spaces of $\QQ$-virtual elliptic curves have been well-studied. Elkies\cite{MR2058644}
produced some explicit equations for his moduli spaces which are quotients of modular
curves. Gonz\'alez-Lario\cite{MR1643280} classified those that are of genus $0$ or $1$. Based on
their parametrisation, Quer\cite{MR1770611} computed explicit equations of some $\QQ$-curves.  We
attempt to classify our moduli spaces. At this point, we focus on Abelian surfaces. For moduli
spaces of $k$-virtual RM Abelian surfaces of $\GL_2 (E)$-type, the PEL Shimura varieties are
disjoint unions of Hilbert modular surfaces.   We then go on to analyse the moduli
spaces along the line started by Hirzebruch\cite{MR0393045} and extended by Hirzebruch-Van de
Ven\cite{MR0364262} and Hirzebruch-Zagier\cite{MR0480356}. A thorough write-up is available in the
book of Van der Geer\cite{MR930101}.  Our family of Hilbert modular surfaces has not been considered in the literature. In this paper, as a first step, we treat only the case when $E$ has
trivial narrow class group and when the level structure is $\cO_E/\fp$ for some prime $\fp$ of
$\cO_E$, and leave the more technical/interesting cases for the future. Based on the previous results, we are able to estimate the Chern
numbers of the desingularisation of our Hilbert modular surfaces and to determine explicit bounds on
the discriminant of $E$ and the size of the level structure beyond which the Hilbert modular surfaces
are of general type (Theorem~\ref{thm:general_type}). By Lang's conjecture, we do not expect them to
furnish many $k$-rational points. Thus we turn to examine the Hilbert modular surfaces associated to
$E=\QQ (\sqrt {5})$ with $\fp= (2)$ and $E=\QQ (\sqrt {13})$ with $\fp= (4 + \sqrt {13})$. (See Section~\ref{sec:examples} for precise description of these two Hilbert modular surfaces.) By
studying configuration of rational curves coming from desingularisation and Hirzebruch cycles on these two Hilbert modular surfaces, we conclude that
they are both rational surfaces. In the process we have computed the explicit coordinates of the
inequivalent elliptic points which for $E=\QQ (\sqrt {13})$ should be new. The method is due to
Gundlach\cite{MR0229579}. However as the discriminant increases, the  domain in which one scans for elliptic points grows much larger than a fundamental domain. Thus determining inequivalent ones becomes much harder. Further analysis of the moduli spaces will be part of our future research
topic.

The structure of the article is as follows. In Sec.~\ref{sec:av-GL2-type}, we  describe the possible endomorphism algebras for
Abelian varieties of $\GL_2$-type and show that the geometric  factors of simple Abelian varieties over
$k$ of $\GL_2$-type are
$k$-virtual Abelian varieties of $\GL_2$-type.
In Sec.~\ref{sec:moduli} we determine moduli spaces of $k$-virtual
Abelian varieties of $\GL_2$-type by extending Elkies's local tree constructions. We show that the
moduli spaces are quotients of Hilbert modular surfaces or quaternionic Shimura varieties by
the extension of Atkin-Lehner group by a class group. In Sec.~\ref{sec:class-hilb-modul}, we analyse the cusp and quotient
singularities of the Hilbert modular surfaces in question and estimate their Chern numbers to show
that most of them are of general type. Finally we give two examples where the moduli spaces are
rational surfaces in Sec.~\ref{sec:examples}.

\section{Virtual Abelian Varieties of $\GL_2$-type}
\label{sec:av-GL2-type}
In this section we define the virtual Abelian varieties of $\GL_2$-type and deduce some preliminary results.  We introduce the notions of $\GL_2$-type and virtuality separately.
\subsection{Endomorphism Algebras of Abelian Varieties of $\GL_2$-type}
\label{sec:endom-abel-vari}

Let $k$ be a field of characteristic $0$ and $\overline{k}$ its algebraic closure. In this article,
$k$ is most often a number field. Let $A$ be an Abelian variety over $k$. Write $\End (A) $ for its
endomorphism ring.  The endomorphisms are required to be defined over $k$.  The ring
$\End (A_{\overline{k}})$ consists of all potential endomorphisms of $A$. The endomorphism algebra
$\End^0 (A)$ is defined to be $\End (A) \otimes_\Z \Q$. Let $E$ be a number field. We will consider
those Abelian varieties $A$ that admit a $\Q$-algebra embedding $E\hookrightarrow \End^0(A)$.

\begin{defn}\label{defn:GL-2-type}
  An Abelian variety $A$ defined over $k$ is said to be of $\GL_2 $-type if for some number field
  $E$ such that $[E:\Q] = \dim A$, there is an embedding of $\Q$-algebras
  $E\hookrightarrow \End^0 (A)$. If the number field $E$ is specified, we say that $A$ is of
  $\GL_2(E)$-type.
\end{defn}
 We do not require $\End^0 (A)$ to be isomorphic to $E$, as we intend to study moduli spaces of Abelian varieties where Abelian varieties with bigger endomorphism algebras arise naturally and they form special cycles.
Now we  make a more general definition.
\begin{defn}
  An Abelian variety $A$ defined over $k$ is said to be of $\GL_n (D) $-type if for some division algebra $D$ over $\QQ$
  such that $[D:\Q] = 2\dim A/n$, there is an embedding of $\Q$-algebras
  $D\hookrightarrow \End^0 (A)$.
\end{defn}
We note that in this case the Tate module $V_\ell (A)$ is free of rank $n$ over $D\otimes_\QQ \QQ_\ell$. This is the rationale behind the naming.

We would like to focus on the non-CM Abelian varieties. We make precise what we mean by CM. `Potentially CM' is probably  more correct, but we opt for a shorter name here.
\begin{defn}
  An Abelian variety $A$ defined over $k$ is said to be of CM-type if for some CM algebra $E$ such
  that $[E:\Q] = 2\dim A$, there is an embedding of $\Q$-algebras
  $E\hookrightarrow \End^0 ( A_{\bar {k}})$.
\end{defn}

\begin{rmk}
  \begin{enumerate}
  \item Every elliptic curve is automatically of $\GL_2(\Q)$-type.
  \item Sometimes we simply say that $E$ acts on $A$ when we mean that $E$ acts on $A$ up to
    isogeny.
  \end{enumerate}
\end{rmk}

The requirement of having a big number field acting on an Abelian variety is very strong. We
investigate its implication.  Assume that the Abelian variety $A$ is isogenous to
\begin{equation*} \prod_{i=1}^{m} A_i^{r_i}.
\end{equation*}
where $A_i$'s are simple Abelian varieties which are pairwise non-isogenous. Fix a polarisation of
$A$. Then we have the associated Rosati involution on the endomorphism algebra of $A$. The
endomorphism algebra $\End^0(A_i)$ is a division algebra classified by Albert. We refer to the book
of Mumford \cite{MR2514037} for details. Set $D_i=\End^0(A_i)$ and let $K_i$ denote the centre of
$D_i$ and $K_{i,0}$ the set of fixed points of the Rosati involution. Put $e_i=[K_i:\Q]$ and
$d_i^2=[D_i: K_i]$. The degree $[K_i:K_{i,0}]$ is either $1$ or $2$. Marking the relative degrees on
the diagram, we have
\begin{equation*}
  \begin{tikzcd} \Q \ar[rr, hook, "e_i"] \ar[dr,hook] & &K_i \ar[r,hook,"d_i^2"] & D_i \\
    &K_{i,0} \ar[ur,hook,"\text{$1$ or $2$}"']& & 
  \end{tikzcd}.
\end{equation*}
Composing the embedding $E\hookrightarrow \End^0(A)\isom \prod_{i=1}^m M_{r_i}(D_i)$ with projection
onto each factor $M_{r_i}(D_i)$, we get embeddings $E\hookrightarrow M_{r_i}(D_i)$ for all $i$. A
maximal subfield of $M_{r_i}(D_i)$ has degree $r_ie_id_i$ over $\Q$. In addition, the following
constraints are in effect: $e_id_i | \dim A_i$ if $D_i$ is of type I, II or III;
$e_id_i^2 | 2\dim A_i$ if $D_i$ is of type IV. The types are as in \cite[page 187]{MR2514037}. Briefly, an endomorphism algebra of type I is a totally real number field, that of type II is a
totally indefinite quaternion algebra over a totally real number field, that of type III is a
totally definite quaternion algebra over a totally real number field and that of type IV is a
division algebra over a CM field. Write $\deg E$ for $[E:\Q]$. Note that
\begin{align*}
  \deg E = &\dim A \ge r_i\dim A_i;\\ r_ie_id_i \ge &\deg E
\end{align*}
for all $i$. Thus if for any $i$, $e_id_i \le \dim A_i$, we are forced to have $m=1$ and $A$ is
isogenous to $A_1^{r_1}$ with $e_1d_1 = \dim A_1$. In this case $A_1$ has action by a field of
degree equal to $\dim A_1$. Thus $A_1$ is a simple Abelian variety of $\GL_2$-type. Now
suppose for all $i$, $e_id_i > \dim A_i$. This can happen only when all $D_i$'s are of type IV with
$d_i=1$ and $e_i=2\dim A_i$. In other words all $A_i$'s have CM. We have
\begin{align*}
  2r_i\dim A_i \ge \deg E = \dim A = \sum_{j=1}^m r_j\dim A_j
\end{align*}
for all $i$. Thus $r_i\dim A_i \ge r_j\dim A_j$ for all $i$ and $j$. As a result
$r_i\dim A_i = r_j \dim A_j$ for all $i$ and $j$ and $\deg E = m r_1\dim A_1$. Hence $m\le 2$. When
$m=1$, $A$ is isogenous to $A_1^{r_1}$ which is a power of a CM Abelian variety and $E$ is not a
maximal field acting on $A$. When $m=2$, $A$ is isogenous to $A_1^{r_1} \times A_2^{r_2}$ with $A_1$
and $A_2$ being CM Abelian varieties such that $r_i\dim A_i = r_j \dim A_j$, $E$ is a maximal field acting
on $A$ and furthermore $E$ is a finite field extension of a CM field.

If furthermore we assume that $E$ is stabilised by the Rosati involution on $A$. Then by positivity
of Rosati involution, $E$ is either a totally real field with Rosati involution acting as identity
or a CM field with Rosati involution acting as complex conjugation.

We have shown:
\begin{prop} \label{prop:GL2-av-decomp} Let $A$ be an Abelian variety of $\GL_2$-type over $k$.
  \begin{enumerate}
  \item If $A$ is not a CM Abelian variety, then $A$ is isogenous to $A_1^r$ where $A_1$ is a simple
    Abelian variety of $\GL_2$-type and $r \in \Z_{>0}$.
  \item If $A$ is a CM Abelian variety, then $A$ is isogenous either to $A_1^r$ where $A_1$ is a
    simple CM Abelian variety and $r \in \Z_{>0}$ or to $A_1^{r_1} \times A_2^{r_2}$ where $A_i$ is
    a simple CM Abelian variety and $r_i \in \Z_{>0}$ for $i=1, 2$ and $r_1\dim A_1 = r_2\dim A_2$.
  \end{enumerate}
  
\end{prop}
\begin{rmk} Since obviously $E$ also acts on $A_{\bar{k}}$, we also get a description of the
  decomposition of $A$ over $\bar{k}$.
\end{rmk}

Now we focus on simple Abelian varieties of $\GL_2(E)$-type. Their endomorphism algebra can be
strictly larger than $E$. Let $D$ denote $\End^0 (A)$, $K$ the centre of $D$ and $K_0$ the set of
fixed points in $K$ of the Rosati involution.  Put $e=[K:\Q]$, $e_0= [K_0:\Q]$ and
$d^2=[D: K]$.
\begin{prop}\label{prop:endom-alg-of-simple-av-GL2}
  Let $A$ be a simple Abelian variety of $\GL_2(E)$-type over $k$. Let $g=\dim A$. Then the endomorphism algebra of $A$
  must be of one of the following forms.
  \begin{enumerate}
  \item $D=K=E$ is a totally real number field.
  \item $D$ is a division quaternion algebra over a totally real field $K$ with $[K:\Q]=g/2$ and $E$
    is a quadratic extension of $K$ contained in $D$.
  \item $D=K=E$ is a CM field.
  \item $D$ is a division quaternion algebra over a CM field $K$ with $[K:\Q]=g/2$ and $E$ is a
    quadratic field extension of $K$ contained in $D$.
  \item $D=K$ is a CM field with $[K:\Q]=2g$ and $E$ is a subfield of $K$ with $[K:E]=2$.
  \end{enumerate}
  Furthermore, if $k$ is algebraically closed, then $D$ cannot be of type III (totally definite
  quaternion division algebra over a totally real number field).
\end{prop}
\begin{proof} If $D$ is of type I, II or III, then we have the constraint $ed|g$. When $D$ is of
  type I, then $d=1$ and $e|g$. Thus we must have $e=g$ and $K=E$. When $D$ is of type II or III,
  then $d=2$ and $2e|g$. A maximal subfield of $D$ is of degree $2e$. We must have $2e=g$ and $E$
  must be a quadratic extension of $K$. Of course, this can only happen when $g$ is even.

  Now assume
  that $D$ is of type IV. We have the constraint $e_0d^2 | g$. A maximal subfield of $D$ is of
  degree $2e_0d$. Thus $2e_0d \ge g$. We must have $d=1$ or $2$. When $d=1$, we deduce from $e_0 |g$
  and $2e_0 \ge g$ that $e_0 = g/2$ or $g$. In the former case, we get $D=K=E$ and this can only
  occur when $g$ is even. In the latter case we get that $D=K$ is a CM field with $[K:\Q]$=2g and
  $E$ is a subfield of $K$ with $[K:E]=2$. When $d=2$, we deduce from $4e_0 |g$ and $4e_0 \ge g$
  that $e_0 = g/4$. This can only occur when $4|g$. In this case, $D$ is a division quaternion
  algebra over $K$ which is CM with $[K:\Q]=g/2$ and $E$ is a quadratic extension of $K$ contained
  in $D$.

  When $k$ is of characteristic $0$ and is algebraically closed, then we can rule out more
  possibilities. By \cite[Proposition~15]{MR0156001}, $D$ cannot be of type III; $\End^0 (A)$ is
  forced to grow larger. In fact, $A$ is isogenous to $A_1^2$ with $A_1$ CM.
\end{proof}

If furthermore we assume $k=\CC$, certain analytic representations of $D$ on the Lie algebra of $A$
cannot occur. We summarise the results of \cite [Sec.~4] {MR0156001}. When $D$ is of type I, II or
III, the rational representations of $D$ must contain all of its inequivalent irreducible
representations with the same multiplicity. Thus $E$ acts on $\Lie (A)$ via all of its
embeddings into $\CC$ with each occurring once. Now assume that $D$ of type IV. Then we have
\begin{equation*}
  D \otimes_\Q \R \isom M_d (\CC) \times \cdots \times M_d (\CC)
\end{equation*}
where the product is $e_0$-fold or indexed by the $e_0$ embeddings of $K_0$ into $\R$. The $e_0$
natural projections account for half the number of the inequivalent irreducible representations of
$D$. Denote these by $\chi_\nu$ for $\nu =1, \ldots , e_0$. Then $\chi_\nu$ and $\bar {\chi}_\nu$
account for all the inequivalent irreducible representations of $D$. Let $r_\nu$ (resp. $s_\nu$) be
the multiplicity of $\chi_\nu$ (resp. $\bar {\chi}_\nu$) occurring in the analytic representation of
$D$.  We note that in our case, $r_\nu + s_\nu = 2g/de$ which is $2$ or $1$. Then \cite [Prop.~14,
18, 19] {MR0156001} says that if $\sum r_\nu s_\nu = 0$ or $r_\nu = s_\nu =1$ for all $\nu$, then
cases (3) and (4) in Prop.~\ref{prop:endom-alg-of-simple-av-GL2} cannot occur.

Assume that $A$ and all of its endomorphisms can be defined over $\R$. Then the analytic
representation of $D$ on $\Lie (A_\R)$ must be such that $r_\nu = s_\nu =1$. Then cases (3) and (4)
in Prop.~\ref{prop:endom-alg-of-simple-av-GL2} do not occur. A CM Abelian variety cannot be defined
over a totally real number field, so case (5) is not possible for such $A$.

Assume that each of the embedding of $E$ into $\CC$ occurs exactly once in the analytic
representation of $E$ on $\Lie (A_\CC)$. Then this also forces that $r_\nu = s_\nu =1$, ruling out cases (3) and
(4). In case (5) which is the case of CM Abelian variety, for each conjugate pair of embedding of
$K$ into $\CC$, exactly one of them occurs. In order for each embedding of $E$ to occur, $E$ has to
be the totally real subfield $K_0$ of $K$.

Summarising the above, we get:
\begin{prop}\label{prop:endom-alg-of-simple-av-GL2-with-analytic-rep-restriction-or-defined-over-R}
  Let $A$ be a simple complex Abelian variety of $\GL_2 (E)$-type over a number field $k$. Let $g=\dim A$. Assume one of the following.
  \begin{enumerate}[(a)]
  \item Each of the embedding of $E$ into $\CC$ occurs exactly once in the analytic representation
    of $E$ on $\Lie (A_\CC)$.
  \item $A$ and all of its endomorphisms can be defined over $\R$.
  \end{enumerate}
  Then we have exactly the following possibilities.
  \begin{enumerate}
  \item $D=K=E$ is a totally real number field.
  \item $D$ is a totally indefinite division quaternion algebra over a totally real number field $K$
    with $[K:\Q]=g/2$ and $E$ is a quadratic extension of $K$ contained in $D$.
  \item $D=K$ is a CM field with $[K:\Q]$=2g and $E$ is the totally real subfield $K_0$ of $K$. This
    case does not occur when we assume (b).
  \end{enumerate}
\end{prop}

\subsection{Virtual Abelian Varieties}
\label{sec:abs-simple-factor}

We give the definition of virtuality first.
\begin{defn}
  Let $F$ be a Galois extension of $k$ contained in $\bar {k}$. An Abelian variety $B$ over $F$ is
  said to be $k$-virtual if every element of $\End (B_{\bar {k}})$ can be defined over $F$ and for
  all $\sigma\in \Gal (F/k)$, there exists an isogeny $\mu_\sigma: \lsup{\sigma}{B} \rightarrow B$
  such that for all $\alpha\in \End^0 (B)$,
  $\alpha \circ \mu_\sigma = \mu_\sigma\circ \lsup{\sigma}{\alpha} $.
\end{defn}

Such Abelian varieties arise, for example, in the following fashion.
\begin{lemma}
  Let $A$ be a simple Abelian variety over $k$ such that $A_{\bar {k}}$ is isogenous to $B^r$ where $B$ is a simple Abelian variety over $\bar {k}$. Then $B$ is a $k$-virtual Abelian variety. 
\end{lemma}

\begin{proof}
 Fix an isogeny
$f: A_{\bar {k}} \rightarrow B^r$. Let $\sigma\in \Gal (\bar {k}/k)$. Then we have
\begin{equation*}
  \lsup {\sigma} {B}^r \xleftarrow {\lsup {\sigma} {f}} \lsup {\sigma} {A}_{\bar {k}} \xrightarrow {i_\sigma} A_{\bar {k}}
  \xrightarrow {f} B^r,
\end{equation*}
where $i_\sigma$ is the canonical isomorphism.  Thus by uniqueness of decomposition,
$\lsup {\sigma} B$ is isogenous to $B$.  Let $D=\End^0 (A)$ and $K$ be the centre of $D$. Let
$D'=\End^0 (B)$ and $K' = Z (\End^0 (B))$ be its centre. We have the embeddings
\begin{equation*}
  \begin{tikzcd}
    D \ar[r, hook] & M_{r} (D') \\
    K \ar[u,hook] & K' \ar[l,hook'] \ar[u,hook]
  \end{tikzcd}.
\end{equation*}
As every endomorphism $\alpha$ in $D$ is defined over $k$, we have
$\lsup {\sigma} {\alpha} = i_\sigma^{-1}\circ\alpha \circ i_\sigma$. Now let $\alpha\in K'$. This
can be viewed as an endomorphism (up to isogeny) of $B^r$ by acting diagonally. We note that
$ f^{-1}\circ \alpha\circ f$ lies in $K$, so it is defined over $k$. Thus
\begin{align*}
  &\alpha\circ f \circ i_\sigma \circ \lsup {\sigma} {f}^{-1}
    = f\circ (f^{-1}\circ \alpha\circ f) \circ i_\sigma \circ \lsup {\sigma} {f}^{-1}\\
  = &f\circ (i_\sigma \circ \lsup{\sigma}{f}^{-1} \circ \lsup{\sigma}{\alpha} \circ \lsup{\sigma}{f}\circ i_\sigma^{-1}) \circ i_\sigma \circ \lsup {\sigma} {f}^{-1}
      = f \circ i_\sigma \circ \lsup{\sigma}{f}^{-1} \circ \lsup{\sigma}{\alpha}.
\end{align*}
This means that the following diagram commutes:
\begin{equation*}
  \begin{tikzcd} [column sep=large]
    \lsup{\sigma}{B}^r \ar [r, "f\circ i_\sigma \circ \lsup {\sigma} {f}^{-1}"] \ar [d,"\lsup {\sigma} {\alpha}"] & B^r \ar [d,"\alpha"]\\
    \lsup {\sigma} {B}^r \ar [r, "f\circ i_\sigma \circ \lsup {\sigma} {f}^{-1}"] & B^r
  \end{tikzcd}.
\end{equation*}
This induces a commutative diagram
\begin{equation*}
  \begin{tikzcd}
    \lsup{\sigma}{B} \ar [r, "\mu_\sigma"] \ar [d,"\lsup {\sigma} {\alpha}"] & B \ar [d,"\alpha"]\\
    \lsup {\sigma} {B} \ar [r, "\mu_\sigma"] & B
  \end{tikzcd}
\end{equation*}
where $\mu_\sigma$ is an isogeny induced by $f\circ i_\sigma \circ \lsup {\sigma} {f}^{-1}$. In this
sense, $\mu_\sigma$ is $K'$-equivariant.

Now we  augment $K'$-equivariance to $D'$-equivariance.
  We have a morphism of central simple algebras
  \begin{align*}
    D' &\rightarrow D'\\
    \alpha &\mapsto \lsup{\sigma^{-1}}{(\mu_\sigma^{-1}\circ \alpha \circ \mu_\sigma)},
  \end{align*}
  as the condition on $\mu_\sigma$ shows that if $\alpha\in Z (D')$, then
  $ \lsup{\sigma^{-1}}{(\mu_\sigma^{-1}\circ \alpha \circ \mu_\sigma)} = \alpha$. By Skolem-Noether
  Theorem, there exists an element $\beta\in D'$ such that
  $\lsup{\sigma^{-1}}{(\mu_\sigma^{-1}\circ \alpha \circ \mu_\sigma)} = \beta\circ \alpha \circ
  \beta^{-1}$ for all $\alpha\in D'$. Thus
  $\alpha\circ \mu_\sigma \circ \lsup{\sigma}{\beta} = \mu_\sigma\circ \lsup{\sigma}{\beta} \circ
  \lsup{\sigma}{\alpha} $ for all $\alpha\in D'$.  Changing the isogeny $\mu_\sigma$ to $\mu_\sigma\circ \lsup{\sigma}{\beta}$, we get $D'$-equivariance.
\end{proof}
Noting how non-CM Abelian varieties of $\GL_2$-type decomposes (Prop.~\ref{prop:GL2-av-decomp}), we get the following:
\begin{cor}\label{cor:simple-factor-of-GL2AV-is-k-virtual-GL2AV}
  Absolutely simple factors of non-CM Abelian varieties of $\GL_2$-type over $k$ are $k$-virtual Abelian varieties of $\GL_2$-type.
\end{cor}

Given a simple $k$-virtual Abelian variety $B$ over $\bar {k}$
of $\GL_2$-type, one can construct a simple Abelian variety $A$ of $\GL_2$-type over $k$ such
that it has $B$ as an absolutely simple factor. This converse problem has been studied in \cite{MR1212980,MR2058652} over $\QQ$ and in \cite{MR2916969} over arbitrary number field $k$ even though the definition of $\GL_2$-type is more restrictive than here. Their methods generalise easily to the current case. Thus we just record the result.

\begin{prop}\label{prop:virtual-non-virtual-relation}
  Let $B$ be a non-CM $k$-virtual Abelian variety over $\bar {k}$ of $\GL_2$-type. Then there exists
  a non-CM simple Abelian variety $A$ over $k$ of $\GL_2$-type such that $A_{\bar {k}}$ is isogenous
  to a power of $B$.
\end{prop}

\section{Moduli Space of Virtual Abelian Varieties}
\label{sec:moduli}
The aim of this section is to determine a moduli space of $k$-virtual Abelian varieties of
$\GL_2 (E)$-type up to isogeny. One key step is the construction of $\lambda$-local trees (in the
sense of graph theory) for our Abelian varieties where $\lambda$ is a finite place of
$K$ where $K$ denotes the centre of $D=\End^0 (A)$.  Our construction generalises that of
Elkies \cite{MR2058644} where he associated certain trees to non-CM elliptic curves. The major
difficulty in the case of Abelian varieties comes from the fact that the endomorphism ring is much
more complicated.  We still manage to produce trees whose vertices are $k$-virtual Abelian varieties
of $\GL_2 (E)$-type up to a certain equivalence relation and whose edges represent simple
isogenies. Via graph theoretic properties of the trees, for a given $k$-virtual Abelian Varieties of
$\GL_2 (E)$-type, we can find an isogenous Abelian variety whose Galois orbit is contained in the
(generalised) Atkin-Lehner orbit. This makes it possible to represent $k$-virtual Abelian Varieties
by $k$-points on a quotient of a certain Shimura variety.

\subsection{Local Trees}
\label{sec:local-trees}

After excluding the CM case, there are two cases left for the endomorphism algebra of an Abelian
variety of $\GL_2 (E)$-type. One is when $\End^0 (A)$ is isomorphic to exactly $E$ and the other is
when $\End^0 (A)$ is isomorphic to a division quaternion algebra $D$ that contains $E$
(c.f. Prop.~\ref{prop:endom-alg-of-simple-av-GL2}).  After changing $A$ to an isogenous Abelian
variety, we may assume that $\End (A)$ is isomorphic to $\cO_E$ in the former case and that it is
isomorphic to a maximal order of $D$ in the latter case. Let $K$ be the centre of $D$. Fix a maximal
order $\cO_D$ of $D$. We use extensively results on maximal orders over complete discrete valuation
ring or over Dedekind domain. One good reference is Reiner's book\cite{MR0393100}. To unify the
construction for the two cases of endomorphism algebras, set
\begin{align*}
  &S = \cO_E \quad \text { and } \quad R =\cO_E , \\
  \text { or }\quad &S = \cO_D \quad \text { and } \quad R = \cO_K.
\end{align*}
Let $\cA (S)$ be the category where the objects are Abelian varieties $A$ of $\GL_2$-type such that
$\End (A)\isom S$ and the morphisms are $S$-linear isogenies. As usual, let $T_\ell A$ denote the
Tate-$\ell$-module associated to an Abelian variety $A$.  Let $\lambda$ be a prime of $R$. Write
$R_\ell$ for $R \otimes_{\ZZ} \ZZ_\ell$, $R_\lambda$ for the completion of $R$ at $\lambda$ and
$\cO_{D,\lambda}$ for $\cO_D\otimes_{\cO_K} \cO_{K,\lambda}$. Let $\varpi_\lambda$ be a uniformiser
of $\lambda$. To avoid confusion, sometimes we write $\varpi_{E,\lambda}$
(resp. $\varpi_{K,\lambda}$) to indicate which field we are working with. When $\lambda$ ramifies in
$D$, set $\varpi_{D,\lambda}$ be a uniformiser of the prime ideal of $\cO_{D,\lambda}$, i.e.,
$\varpi_{D,\lambda}^2 = u \varpi_{K,\lambda}$ for some $u\in \cO_{D,\lambda}^\times$.  Set
$T_\lambda A = T_\ell A \otimes_{R_\ell} R_\lambda$. This is a free $\cO_{E,\lambda}$-module
(resp. free left-$\cO_{D,\lambda}$-module) of rank $2$ (resp. $1$).

We construct a graph out of $\cA (S)$ as follows. Fix a prime $\lambda$ of $R$. The vertices are
equivalence classes of Abelian varieties in $\cA (S)$. We say the Abelian varieties $A$ and $B$ are
equivalent if there exists a morphism $f:A\rightarrow B$ such that the image of
$ T_\lambda f : T_\lambda A \rightarrow T_\lambda B$ is $\varpi_\lambda^n T_\lambda B$ for some
$n\in\Z_{\ge 0}$. Write $[A]_\lambda$ for the equivalence class of $A$.  Let $d=\deg f$. Then we
have a morphism  $g:B\rightarrow A$ such that the following diagram commutes:
\begin{equation*}
  \begin{tikzcd}[column sep=small]
    A\ar [rr,"f"] \ar [dr,"{[d]_A}"'] & & B \ar [dl,"g"]\\
    &A &
  \end{tikzcd}.
\end{equation*}
This induces the commutative diagram for Tate modules:
\begin{equation*}
  \begin{tikzcd}[column sep=small]
    T_\lambda A\ar [rr,"T_\lambda f"] \ar [dr,"{d}"'] & & T_\lambda B \ar [dl,"T_\lambda g"]\\
    &T_\lambda A &
  \end{tikzcd}.
\end{equation*}
The image of $d$ is of the form $\varpi_\lambda^{n'} T_\lambda A$ for some integer $n' \ge 0$. Thus
$T_\lambda g$ has image $\varpi_{\lambda}^{n'-n}T_\lambda A$, which shows that the equivalence relation is
well-defined.  We note that the Abelian varieties $A/A [\fa]$ for $\fa$ running over all ideals of
$R$ correspond to the same vertex in the graph for each $\lambda$.

Next we define the edges of the graph.  For $r\in\ZZ_{>0}$, set $M_r$ to be
\begin{enumerate}
\item (Case $E$) $\cO_{E,\lambda}/\varpi_{E,\lambda}^r\cO_{E,\lambda}$ if $S=\cO_E$;
\item (Case $D_\ram$) $\cO_{D,\lambda}/ \varpi_{D,\lambda}\cO_{D,\lambda}$, if $S=\cO_D$ and
  $\lambda$ ramifies in $D$;
\item (Case $D_\splt$) $(\cO_{K,\lambda}/ \varpi_{K,\lambda}^r\cO_{K,\lambda})^2$, if $S=\cO_D$ and
  $\lambda$ splits in $D$.
\end{enumerate}
When $r=1$, then we are quotienting out the maximal left ideal in all three cases.  In case
$D_\ram$, $r$ can only take the value $1$.  Connect two vertices $[A]_\lambda$ and $[B]_\lambda$ if
there exist representatives $A$ for $[A]_\lambda$ and $B$ for $[B]_\lambda$ and a morphism
$f:A\rightarrow B$ such that $\coker (T_\lambda f)$ is isomorphic to $M_1$.  We show that the edge
is bidirectional.  Let $d$ be the degree of $f$. Then we get a morphism $g: B\rightarrow A$ such
that $[d]_A = g\circ f$. Then the image of $d$ is of the form $\varpi_{\lambda}^{n'} T_\lambda A$
for some integer $n' > 0$. Assume we are in Case $E$.  Then for some choice of
$\cO_{E,\lambda}$-basis $\{e_1,e_2\}$ of $T_\lambda A$, the image of $T_\lambda g$ is
$\varpi_{E,\lambda}^{n'-1}\cO_{E,\lambda} e_1 \oplus \varpi_{E,\lambda}^{n'}\cO_{E,\lambda}
e_2$. Thus $g$ factors as the composition of the natural projection from $B$ to
$B/B [\lambda^{n'-1}]$ and a morphism $g': B/B[\lambda^{n'-1}] \rightarrow A$ with
$\coker (T_\lambda g') \isom \cO_{E,\lambda}/\lambda$. Assume that we are in Case $D_\ram$. Then the
image of $T_\lambda g$ is $\varpi_{D,\lambda}\varpi_{K,\lambda}^{n'-1}T_\lambda A$. Thus $g$ factors
as the composition of the natural projection from $B$ to $B/B [\lambda^{n'-1}]$ and a morphism
$g':B/B [\lambda^{n'-1}]\rightarrow A$ with
$\coker (T_\lambda g') \isom \cO_{D,\lambda}/\varpi_{D,\lambda}\cO_{D,\lambda}$. Assume that we are
in Case $D_\splt$. Fix an isomorphism $\cO_{D,\lambda}\isom \Mat_2 (\cO_{K,\lambda})$.  Then for
some choice of generators $e_1, e_2\in T_\lambda A$, image of $T_\lambda g$ is
$\varpi_{K,\lambda}^{n'-1} \Mat_2 (\cO_{K,\lambda}) e_1 \oplus \varpi_{K,\lambda}^{n'}\Mat_2
(\cO_{K,\lambda}) e_2$.  Thus $g$ factors as the composition of the natural projection from $B$ to
$B/B [\lambda^{n'-1}]$ and a morphism $g':B/B [\lambda^{n'-1}]\rightarrow A$ with
$\coker (T_\lambda g') \isom (\cO_{K,\lambda}/\varpi_{K,\lambda}\cO_{K,\lambda})^2$.

\begin{lemma}\label{lemma:no-loop-V-to-V}
  Assume that Abelian varieties $A$ and $B$ represent the same vertex. Then there does not exist a
  morphism $f:A\rightarrow B$ with $\coker (T_\lambda f)$ isomorphic to $M_1$. In particular, there
  is no loop in the graph.
\end{lemma}
\begin{proof}
  Assume that there exists such an $f$. Since $A$ and $B$ represent the same vertex, there exists a morphism $g:B\rightarrow A$ such that
  $T_\lambda g$ has image $\varpi_\lambda^n T_\lambda A$ for some $n\in \ZZ_{\ge 0}$. As the
  composite $g\circ f$ lies in the centre of $\End (A)$, $T_\lambda g \circ T_\lambda f$ has image
  of the form $\varpi_\lambda^{n'} T_\lambda A$ for some $n' \in \ZZ_{\ge 0}$. This is not
  possible if $f$ is such that $\coker (T_\lambda f)$ is isomorphic to $M_1$.
\end{proof}

At this point we see that in case $D_\ram$ each connected component of the graph consists of $2$
vertices connected by an edge and hence is a tree. Recall that in graph theory, a walk is an
alternating sequence of vertices and edges and a path is a walk in which all edges are distinct and
all vertices (except possibly the first and the last) are distinct.

\begin{lemma}\label{lemma:seq-of-morphisms-with-kernel-n}
  Exclude Case $D_\ram$.  Assume that two vertices $V_0$ and $V_n$ can be connected by a path of
  length $n$ via vertices $V_1$, $V_2$,... and $V_{n-1}$. Then there exists representatives $A_i$
  for $V_i$ for $i=0,\ldots ,n $ and morphisms $f_i:A_{i-1}\rightarrow A_i$ for $i=1,\ldots ,n $
  such that $\coker (T_\lambda f_i) \isom M_1$ for $i=1,\ldots ,n $ and that
  $\coker (T_\lambda (f_n \circ \cdots \circ f_1 )) \isom M_n$.
\end{lemma}
\begin{proof}
  When $n=1$, the statement is true by construction. Assume the statement holds for all paths with
  lengths less than $n$. We get a morphism
  \begin{equation*}
    g: A_1\xrightarrow {f_2} \cdots \xrightarrow {f_n} A_n
  \end{equation*}
  with $\coker (f_i)\isom M_1$ for $i=2,\ldots, n$ and $\coker (T_\lambda g) \isom M_{n-1}$ and a
  morphism $f_1: A_0\rightarrow A_1$ with $\coker (T_\lambda f)$ isomorphic to $M_1$. Consider the
  cokernel of the composite $T_\lambda (g\circ f_1)$. We separate the two cases.

  Assume that we are in Case $E$. Then $\coker (T_\lambda (g\circ f_1))$ is an extension of
  $\coker (T_\lambda g) \isom \cO_{E,\lambda}/\varpi_{E,\lambda}^{n-1}\cO_{E,\lambda}$ by
  $\coker (T_\lambda f)\isom \cO_{E,\lambda}/\varpi_{E,\lambda}\cO_{E,\lambda}$ as
  $\cO_{E,\lambda}$-modules. Thus it is isomorphic to
  \begin{equation*}
    \cO_{E,\lambda}/\varpi_{E,\lambda}^{n-1}\cO_{E,\lambda}\oplus \cO_{E,\lambda}/\varpi_{E,\lambda}\cO_{E,\lambda} \quad \text {or}\quad \cO_{E,\lambda}/\varpi_{E,\lambda}^n\cO_{E,\lambda}.
  \end{equation*}
  In the former case, we can see that $\coker (T_\lambda (f_2\circ f_1))$ must be isomorphic to
  $(\cO_{E,\lambda}/\varpi_{E,\lambda}\cO_{E,\lambda})^2$. This means that $A_0$ and $A_2$ represent
  the same vertex.

  Assume that we are in Case $D_\splt$.  Then $\coker (T_\lambda (g\circ f_1))$ is an extension of
  $\coker (T_\lambda g) \isom (\cO_{K,\lambda}/\varpi_{K,\lambda}^{n-1}\cO_{K,\lambda})^2$ by
  $\coker (T_\lambda f)\isom (\cO_{K,\lambda}/\varpi_{K,\lambda}\cO_{K,\lambda})^2$ as left
  $\Mat_2 (\cO_{K,\lambda})$-modules. If we consider extensions as $\cO_{K,\lambda}$-modules, then
  there are $3$ possibilities:
  \begin{align*}
    (\cO_{K,\lambda}/\varpi_{K,\lambda}\cO_{K,\lambda} \oplus \cO_{K,\lambda}/\varpi_{K,\lambda}^{n-1}\cO_{K,\lambda})^2, (\cO_{K,\lambda}/\varpi_{K,\lambda}^n\cO_{K,\lambda})^2,\\
    (\cO_{K,\lambda}/\varpi_{K,\lambda}\cO_{K,\lambda} \oplus \cO_{K,\lambda}/\varpi_{K,\lambda}^{n-1}\cO_{K,\lambda}) \oplus \cO_{K,\lambda}/\varpi_{K,\lambda}^n\cO_{K,\lambda}.
  \end{align*}
  The third one does not occur in the list of left $\Mat_2 (\cO_{K,\lambda})$-module
  extensions. Similar to the arguments in Case $E$, the first one will force $A_0$ and $A_2$ to
  represent the same vertex, leading to a contradiction.

  Thus in both cases we are led to the conclusion that $\coker (T_\lambda (g\circ f_1)) \isom M_n$.
\end{proof}

\begin{prop}
  Each connected component of the graph is a tree.
\end{prop}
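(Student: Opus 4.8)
The plan is to transcribe, essentially line for line, the proof already given for the case $\End^0(A)\isom E$, once the two module-theoretic ingredients that powered it are re-established in the quaternionic setting. Recall that there the argument had two parts: (i) a \emph{path-length lemma} saying that a path of length $n$ between $[A]$ and $[B]$ is realized by a single morphism $f\in\Mor_{\cB_\lambda}(A,B)$ with $\ker f\isom\cO_E/\lambda^n$ for suitable representatives, and (ii) the observation that a loop would, via (i), produce an $\cO_E$-linear endomorphism of $A$ whose $\lambda$-part kernel has a shape that no element of $\End_{\cO_E}(A)$ can have, because $T_\lambda A$ is free of rank $2$ over $\cO_{E,\lambda}$, so every isogeny in $\End_{\cO_E}(A)$ has $\lambda$-kernel $(\cO_E/\lambda^s)^2$. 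I would therefore first install the analogous dictionary for $\cO_D$, treating the split and non-split primes separately.

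At a prime $\lambda$ where $D$ is non-split, the localization $\cO_{D,(\fQ)}=\cO_D\otimes_{\cO_L}\cO_{L,(\lambda)}$ becomes, after completion, the maximal order of the division quaternion algebra $D_\lambda$ over $L_\lambda$: a noncommutative local principal ideal ring with two-sided maximal ideal $\fQ$ and $\fQ^2=\lambda$, over which every finitely generated torsion module is a direct sum of cyclics $\cO_D/\fQ^{a_i}$. A dimension count ($2\dim A=4[L:\Q]=\dim_\Q D$) shows $V_\lambda A$ is free of rank $1$ over $D_\lambda$, hence $T_\lambda A$ is free of rank $1$ over $\cO_{D,(\fQ)}$, so $A[\lambda^j]$ has $\lambda$-part $\isom\cO_D/\fQ^{2j}$. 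Since $\End^0_{\cO_D}(A)$ is the centralizer of $D$ in $\End^0(A)=D$, namely the centre $L$, every $\cO_D$-linear isogeny $A\to A$ acts on $T_\lambda A$ through a central element and thus has $\lambda$-kernel of the ``even'' shape $\cO_D/\fQ^{2j}$. With these facts the induction proving (i) runs exactly as before — an edge contributes $\ker\isom\cO_D/\fQ$, $\fQ$-lengths of composed edge-isogenies add, and any cancellation would produce two edges between the same pair of vertices, impossible in a simple graph — and a loop then forces an $\cO_D$-linear endomorphism of $A$ with $\lambda$-kernel of the ``extra $\fQ$'' shape $\cO_D/\fQ^{n+2r}$ with $n\geq 1$, incompatible with the ``even'' shape. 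At a prime $\lambda$ where $D$ splits, $\cO_{D,(\fQ)}$ is Morita-equivalent to $\cO_{L,(\lambda)}$; under this equivalence $\cO_D$-lattices in $V_\lambda A$ correspond to $\cO_{L,\lambda}$-lattices in $L_\lambda^2$, the relation $\approx$ to homothety, and the edge relation to elementary-divisor type $(1,\lambda)$, so the $\lambda$-graph is precisely the Bruhat--Tits tree of $\PGL_2(L_\lambda)$; one may either re-run the same path-length/no-loop argument (with $\cO_D/\fQ^n$ replaced by the length-$n$ module and $\cO_D/\fQ^{2j}$ by $\Mat_2(\cO_L/\lambda^j)$) or simply quote the classical fact that this tree is a tree. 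The non-split case is in any event essentially immediate, since $\cO_{D,(\fQ)}$ being a local principal ideal ring makes the relevant connected component very small.

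The main obstacle is the module-theoretic bookkeeping hidden in (i): one must pin down the $\cO_D$-module structure of kernels of composites of edge-isogenies in each local case, and in particular verify that a genuine composite of $n$ edges has kernel exactly the length-$n$ cyclic-type module unless a backtracking occurs, and that backtracking is precisely what creates a second edge between two vertices. Once this dictionary is set up correctly in both the division-algebra-local case (a local principal ideal ring) and the split case (where one is re-deriving the standard properties of the Bruhat--Tits tree of $\PGL_2(L_\lambda)$), the proof is a line-by-line copy of the one already given for $\End^0(A)\isom E$.
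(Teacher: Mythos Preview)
Your proposal is correct and follows exactly the approach the paper indicates: the paper's own proof here consists of the single phrase ``Similarly we get'', referring back to the argument in the case $\End^0 A\isom E$. You have carried out that transplant with the module theory adjusted to $\cO_D$ in both the split and non-split cases, supplying considerably more detail than the paper does (including the helpful Bruhat--Tits identification at split primes and the observation that at non-split primes the connected components are tiny).
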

\begin{proof}
  We need to show that there is no cycle. Assume that there is a cycle of length $n$ from the vertex
  $V$ to $V$. By Lemma~\ref{lemma:seq-of-morphisms-with-kernel-n}, there exist $A$ and $B$ in the
  equivalence class $V$ and a morphism $f:A\rightarrow B$ such that $\coker (T_\lambda f)\isom M_n$.
  This is not possible by Lemma~\ref{lemma:no-loop-V-to-V}. Thus there can be no cycle.
\end{proof}

\begin{defn}
  Let $\lambda$ be a prime of $R$.  Each connected component of the graph constructed above is
  called a $\lambda$-local tree. Let $A$ be an Abelian variety in $\cA (S)$. The $\lambda$-local
  tree containing the vertex $[A]_\lambda$ is called the $\lambda$-local tree associated to $A$.
\end{defn}

If $A\in \cA (S)$ is a $k$-virtual Abelian variety, then every vertex of the $\lambda$-local tree
associated to $A$ is an equivalence class of a $k$-virtual Abelian variety and the Galois group
$\Gal (\bar {k}/k)$ acts on the tree.

\begin{defn}
  Let $A$ be a $k$-virtual Abelian variety in $\cA (S)$. Set $O_\lambda (A)$ to be the
  $\Gal (\bar {k}/k)$-orbit $\{[\lsup{\sigma}{A}]_\lambda | \sigma \in \Gal (\bar {k}/k)\}$
  contained in the $\lambda$-local tree.
\end{defn}

A priori, the Galois orbit $O_\lambda (A)$ associated to a $k$-virtual Abelian variety $A$ is hard
to describe. However for some special vertices in the tree the Galois orbit is essentially contained
in the Atkin-Lehner orbit which we will describe below. We recall a definition from graph theory.

\begin{defn}
  For a finite subset $U$ of vertices of a tree, the centre of $U$ is defined to be the central edge or
  central vertex on any one of the longest paths connecting two vertices in $U$.
\end{defn}
\begin{rmk}
  There are possibly multiple longest paths, but they give the same centre. Thus the centre is
  well-defined.
\end{rmk}

\begin{defn}
  The $\lambda$-centre associated to a $k$-virtual Abelian variety $A\in \cA (S)$ is defined to be
  the centre of the Galois orbit $O_\lambda (A)$ in the $\lambda$-local tree.
\end{defn}
Since the Galois orbit $O_\lambda (A)$ is Galois stable, we have:
\begin{prop}\label{prop:centre-is-galois-fixed}
  The $\lambda$-centre of $k$-virtual Abelian variety $A\in \cA (S)$ is fixed under the action of
  $\Gal (\bar {k}/k)$. (If the centre is an edge it can possibly be flipped.)  Furthermore the
  vertices in the Galois orbit $O_\lambda (A)$ are at the same distance to (the nearer vertex of)
  the $\lambda$-centre.
\end{prop}
We consider the $\lambda$-centres that are central edges.
\begin{prop}
  The set of central edges associated to a $k$-virtual Abelian variety $A\in\cA (S)$ is an
  $S$-linear isogeny invariant. Thus it is an invariant for the $\lambda$-local tree.
\end{prop}
\begin{proof}
  Suppose the $\lambda$-centre associated to $A$ is an edge. Let $B\in\cA (S)$ be an Abelian variety
  that is $S$-linearly isogenous to $A$. We need to show that the $\lambda$-centre of $B$ is the same
  edge.
  
  First we note that there exists an element in $\Gal (\bar {k}/k)$ that exchanges the two vertices
  connected by the central edge. Otherwise all Galois conjugates of $A$ would be on one side of the
  edge, contrary to the fact that this edge is central.  Once we have an edge which is flipped under
  Galois action there can be no fixed vertices or other fixed edges in the tree.  Since the
  $\lambda$-centre associated to $B$ is fixed under Galois action, it must be the same edge that is
  the $\lambda$-centre for $A$.
\end{proof}

\begin{rmk}
  Central vertices are not necessarily isogeny invariants. For example we can take an Abelian
  variety $A\in\cA (\cO_E)$ over $k$ and take $B=A/C$ where $C$ is a $k$-subgroup of $A$ isomorphic
  to $\cO_E/\lambda$. Then obviously the central vertices, $[A]_\lambda$ and $[B]_\lambda$, are not
  the same vertex by construction.
\end{rmk}

\begin{defn}
  Let $A\in\cA (\cO_E)$ be a $k$-virtual Abelian variety. Set $\Sigma (A)$ to be the set of primes
  $\lambda$ of $\cO_E$ such that the $\lambda$-centre of $A$ is an edge.
\end{defn}
There is an analogous definition for $k$-virtual Abelian varieties in $\cA (\cO_D)$.  We note that
for each prime $\lambda$ of $\cO_K$, there exists a unique prime ideal $\til {\lambda}$ of $\cO_D$
that lies above $\lambda$\cite [Theorem~22.4] {MR0393100}. The $\lambda$-central edge associated to
$A$ determines a maximal left ideal $\cM_\lambda$ of $\cO_D$ that belongs to $\til {\lambda}$, in
the sense that $\til {\lambda}$ is the annihilator of $\cO_D/\cM_\lambda$ in $\cO_D$\cite
[Theorem~22.15] {MR0393100}.
\begin{defn}
  Let $A\in\cA (\cO_D)$ be a $k$-virtual Abelian variety. Set $\Sigma (A)$ to be the set of maximal
  left ideals $\cM_\lambda$ of $\cO_D$ determined by the $\lambda$-central edges of $A$.
\end{defn}
\begin{rmk}
  To unify the notation, we also write $\cM_\lambda$ for $\lambda$ in case $A \in \cA (\cO_E)$.
\end{rmk}

\begin{lemma}
  The set $\Sigma (A)$ is a finite set and for almost all $\lambda$'s, $[A]_\lambda$ is its own
  $\lambda$-centre.
\end{lemma}
\begin{proof}
  The Abelian varieties $\lsup{\sigma}{A}$ for $\sigma$ in $\Gal (\bar {k}/k)$ end up in the same
  equivalence class as $A$, as long as $\lambda$ does not divide the degree of the isogenies $\mu_\sigma$'s
  between the Galois conjugates. Thus there are only finitely many $\lambda$'s such that the
  $\lambda$-centre associated to $A$ can be an edge.
\end{proof}

For each $\cM_\lambda\in\Sigma (A)$ we choose one of the vertices $V_\lambda$ on the central edge
and for each $\cM_\lambda\notin \Sigma (A)$ we just use the central vertex $V_\lambda$.  The path
connecting $[A]_\lambda$ to $V_\lambda$ describes the `$\lambda$'-part of an isogeny. Thus the
chosen vertices give rise to an  Abelian variety $A_0$ isogenous to $A$. As the vertices are equivalence
classes of Abelian varieties, we cannot distinguish the Abelian varieties $A_0$ from $A_0/A_0 [\fa]$
for integral ideals $\fa$ of $R$, or in terms of Serre tensor (c.f. for example \cite [Sec.~1.7.4]
{MR3137398}), from $A_0\otimes_{R}\fa$ for fractional ideals $\fa$ of $R$. These Abelian varieties
are exactly the ones that correspond to the vertex $[A_0]_\lambda$ for each $\lambda$.  We note that
if $\fa$ is principal, then $A_0\otimes_{R}\fa$ is isomorphic to $A_0$. Thus the set of these
vertices $V_\lambda$ determines an Abelian variety up to the action of the class group $\Cl (R)$.

Now let $A_0$ be an Abelian variety such that $[A_0]_\lambda = V_\lambda$ for all prime $\lambda$ of
$R$. The finitely many central edges determine a level structure on $A_0$. This is an embedding
\begin{equation*}
  \oplus_{\cM_\lambda \in \Sigma (A_0)} S/\cM_\lambda \hookrightarrow A_0.
\end{equation*}
For $A_0 \in \cA (\cO_E)$, the left-hand side is isomorphic to
$\cO_E/ \cap_{\lambda \in \Sigma (A_0)}\lambda$ or $\cO_E/ \prod_{\lambda \in \Sigma (A_0)}\lambda$.
For $A_0 \in \cA (\cO_D)$, the left-hand side is isomorphic to
$\cO_D/ \cap_{\cM_\lambda \in \Sigma (A_0)}\cM_\lambda$, since the $\cM_\lambda$'s belong to
distinct primes of $\cO_K$. By Prop.~\ref{prop:centre-is-galois-fixed}, the Galois conjugates of $A_0$ must correspond to the central vertices or the vertices on the central edges. In other words, isogenies from the Galois conjugates to $A_0$ are controlled by the level structure. Thus we have shown:
\begin{thm}\label{thm:galois-orbit-bound-by-level-structure}
  For every $k$-virtual Abelian variety $A\in\cA (S)$, there exists a $k$-virtual Abelian variety
  $A_0\in\cA (S)$ which is $S$-linearly isogenous to $A$ and a level structure
  \begin{equation*}
    \eta: S/ \cap_{\cM_\lambda \in \Sigma (A_0)}\cM_\lambda \hookrightarrow A_0
  \end{equation*}
  such that for all $\sigma\in\Gal (\bar {k}/k)$, there exist some ideal $\cI$ of $R$ and  an $S$-linearly isogeny $A_0 \rightarrow \lsup{\sigma}{A_0}\otimes_{R} \cI$ with  kernel contained in the image of $\eta$.
\end{thm}

\subsection{Shimura Varieties of PEL Type}
\label{sec:shim-vari-pel}

The analysis in the previous subsection leads us to consider PEL Shimura varieties 
that classify Abelian varieties $A$ with endomorphism ring that contains $S$ and with level
structure $S/\cN\hookrightarrow A$ where $\cN$ is a full left ideal of $S$ that is square-free, in
the sense that $\cN_\lambda$ is either $S_\lambda$ or a maximal ideal of $S_\lambda$. We separate
the discussion into two cases.

\subsubsection{Abelian varieties with $\cO_E$-action}
\label{sec:abel-vari-with-OE-action}

Consider Abelian varieties of dimension $[E:\QQ]$ with $\cO_E$-action.  The moduli spaces of Abelian varieties of dimension $[E:\QQ]$ with
endomorphism algebra $E$ and a prescribed analytic representation of $E$ were studied by
Shimura\cite{MR0156001}.  We restrict to the case where $E$ is totally real. This is not a
restriction if we are in the situation (a) of
Prop.~\ref{prop:endom-alg-of-simple-av-GL2-with-analytic-rep-restriction-or-defined-over-R}. Then
the PEL Shimura varieties are Hilbert modular varieties that classify Abelian varieties $A$ with
real multiplication $E$ and with level structure $\cO_E/\fn\hookrightarrow A$ where $\fn$ is a
square-free ideal of $\cO_E$. We describe more precisely the moduli problem below.

For a
fractional ideal $\fa$ of $\cO_E$, let $\GL (\cO_E \oplus \fa)$ denote the subgroup of $\GL_2 (E)$
that stabilises the lattice $\cO_E \oplus \fa$ (with action on the right).  Let $A$ be an Abelian
variety such that there is an embedding $\iota:\cO_E\rightarrow \End (A)$. Let $\NS(A)$ denote the
N\'eron-Severi group of $A$. Let $t_x$ denote the translation by $x$ map for $A$. Then We have an
embedding
\begin{align*}
  \NS (A) &\rightarrow \Hom(A,\hat {A})\\
  \cL &\mapsto \phi_\cL: x \mapsto t^*_x\cL \otimes \cL^{-1}
\end{align*}
with image being the set of the symmetric elements in $\Hom (A,\hat {A})$. Set $\NS_E(A)$ to be the
set of $\cL\in \NS (A)$ such that $\phi_\cL \circ \alpha = \hat {\alpha}\circ \phi_\cL$ for all
$\alpha\in \cO_E$ with $\hat {\alpha}$ being the dual endomorphism $\hat {A}\rightarrow \hat
{A}$. The action of $\cO_E$ on $\Hom(A,\hat {A})$ induces an action of $\cO_E$ on $\NS_E (A)$ ,
making it into an $\cO_E$-module of rank $1$. In other words, $\NS_E (A)$ is isomorphic to a
fractional ideal of $\cO_E$.

Let $\fc$ run over a set of representatives of the narrow class group $\Cl^+ (E)$ of $E$.  For each
embedding $\iota: E\hookrightarrow \R$, we fix an ordering of $\fc \otimes_\iota \R$ and thus get a
notion of positivity on $\fc$. We consider Abelian variety $A$ of dimension $g := [E:\Q]$ with
\begin{itemize}
\item $\iota:\cO_E\hookrightarrow \End (A)$ such that the induced action of $E$ on $\Lie (A)_\CC$ is
  given by the $g$ embeddings of $E$ into $\CC$,
\item a weak polarisation $\NS_E (A) \xrightarrow {\isom} \fc$ that maps polarisations to positive
  elements in $\fc$,
\item a level structure $\eta:\cO_E/\fn\hookrightarrow A$.
\end{itemize}

The isomorphism classes of such complex Abelian varieties are parametrised by the complex points of
the Hilbert modular variety
\begin{equation*}
  Y_0 (\fn) (\CC) =\GL_2 (E) \lmod (\cH^\pm)^g \times \GL_2 (\A_{E,f}) / U_0 (\fn).
\end{equation*}
where the compact open subgroup $U_0 (\fn)$ of $\GL_2 (\A_{E,f})$ is defined to be the product of
$\GL (\cO_{E,\lambda}\oplus \cO_{E,\lambda}) \cap \GL (\cO_{E,\lambda}\oplus
\fn^{-1}\cO_{E,\lambda}) $ over all finite places $\lambda$ of $E$. Let $U_\infty$  be the
stabiliser of $(i,\ldots , i) \in \cH^{+}$. The determinant map
\begin{equation*}
  \GL_2 (E) \lmod  \GL_2 (\A_E) / U_\infty U_0 (\fn) \rightarrow E^\times\lmod \A_E^\times /  \A_{E,\infty}^+\prod_{\lambda \nmid \infty} \cO_{E,\lambda}^\times.
\end{equation*}
shows that $Y_0 (\fn) (\CC)$ has $\#\Cl^+ (E)$ connected components.
 
Now we bring the local trees into play. As $A\otimes_{\cO_E}\cI$'s for $\cI\in\Cl (E)$ correspond to the same
vertex in each $\lambda$-local tree, we need to consider the action of $\Cl (E)$ on $ Y_0 (\fn)$.  The action
of $\Cl (E)$ on the isogeny $A \rightarrow A'$ with $A,A'\in \cA (E)$ and kernel isomorphic to
$\cO_E/\fn$ is given by
\begin{equation*}
  (A\rightarrow A') \mapsto (A\otimes_{\cO_E}\cI \rightarrow A'\otimes_{\cO_E}\cI).
\end{equation*}
This changes the polarisation module $\fc$ to $\fc\cI^{-2}$. We also need to consider the flipping
of central edges in $\lambda$-local trees. For each $\lambda|\fn$ we get an action:
\begin{equation*}
  w_\lambda: (A\xrightarrow {f} A') \mapsto (A/\eta (\cO_E/\lambda)\rightarrow A'/f (A [\lambda])).
\end{equation*}
As another application of $w_\lambda$ gives $(A/A [\lambda]\rightarrow A'/A' [\lambda])$, we see
that it is an involution on $\Cl (E)\lmod Y_0 (\fn)$.

\begin{defn}
  \label{defn:extended-atkin-lehner}
  Define the extended Atkin-Lehner group to be the group generated by $\Cl (E)$ and $w_\lambda$'s
  for $\lambda|\fn$ and denote it by $\tilde {W}_{E,\fn}$. Set
  $Y_0^+ (\fn) = \tilde {W}_{E,\fn} \lmod Y_0 (\fn)$.
\end{defn}
With this, we can paraphrase Theorem~\ref{thm:galois-orbit-bound-by-level-structure} above:
\begin{thm}
  Let $A\in\cA (\cO_E)$ be a $k$-virtual Abelian variety. Then there exists an Abelian variety $A_0$
  that is $\cO_E$-linearly isogenous to $A$ such that the Galois orbit $\Gal (\bar {k}/k).A_0$ is
  contained in the extended Atkin-Lehner orbit $\tilde {W}_{E,\fn}.A_0$.
\end{thm}

Thus by construction, $A_0$ is a $k$-rational point on $Y_0^+(\fn)$.  On the other hand take a
$k$-rational point of $Y_0^+(\fn)$ and we get a set $\Sigma$ of Abelian varieties in $ Y_0 (\fn)$
that lie above it. They are isogenous to each other. Take any one of them, say $A_0$. Then its
$\Gal (\bar {k}/k)$-conjugates are still in the set $\Sigma$ and they are $\cO_E$-linearly isogenous
to $A_0$ by construction. This means that $A_0$ is a $k$-virtual Abelian variety of $\GL_2(E)$-type,
even though it may possibly have strictly larger endomorphism algebra than $E$.

We have shown
\begin{thm}
  \label{thm:moduli_of_virtual_av_E}
  Every $k$-point on the Hilbert modular variety $Y_0^+(\fn)$ gives rise to a $k$-virtual Abelian
  variety of $\GL_2(E)$-type. Conversely for any $k$-virtual Abelian variety $A$ of $\GL_2(E)$-type
  with endomorphism algebra isomorphic to $E$ there exists a $k$-virtual Abelian variety of
  $\GL_2(E)$-type $A_0$ that is $E$-linearly isogenous to $A$ such that it corresponds to a
  $k$-rational point on $Y_0^+(\fn (A))$ with $\fn (A) = \prod_{\lambda\in\Sigma (A)}\lambda$.
\end{thm}

\subsubsection{Abelian varieties with $\cO_D$-action}
\label{sec:abel-vari-with-OD-action}

Next consider Abelian varieties of dimension $[D:\QQ]/2$ with $\cO_D$-action where $\cO_D$ is a maximal order of $\cO_D$. In
the situation of
Prop.~\ref{prop:endom-alg-of-simple-av-GL2-with-analytic-rep-restriction-or-defined-over-R}, $D$
must be a totally indefinite quaternion algebra over a totally real field $K$. This is the case we
will pursue. We consider the PEL Shimura varieties that parametrise Abelian varieties with
$\cO_D$-action with level structure $\cO_D/\cN$ where $\cN$ is a full left ideal of $S$ that is
square-free, in the sense that $\cN_\lambda$ is either $\cO_{D,\lambda}$ or a maximal ideal of
$\cO_{D,\lambda}$. Fix a positive anti-involution $\dagger$ on $D$ that fixes $K$ element-wise. Let
$A$ be an Abelian variety with $\iota: \cO_D\rightarrow \End (A)$. We consider the subset
$\NS_D (A)$ of $\NS (A)$ that is compatible with $D$ along with the anti-involution. In other words,
$\NS_D (A)$ consists of $\cL\in \NS (A)$ such that
$\widehat {\iota (\alpha)} \circ \phi_\cL = \phi_\cL \circ \iota (\alpha^\dagger)$. As $\NS (A)$
embeds into the symmetric part of $\Hom (A,\hat {A})$, we see that $\NS_D (A)$ is an $\cO_K$-module
of rank $1$. Let $\fc$ run over a set of representatives of the narrow class group $\Cl^+ (K)$ of
$K$.  As in the previous case we have a notion of positivity on $\fc$. Thus we consider Abelian
varieties $A$ of dimension $g:= 2[K:\QQ]$ with
\begin{itemize}
\item $\iota: \cO_D \hookrightarrow \End (A)$ such that the action of $D$ on $\Lie (A)_\CC$ is the
  direct sum of the standard representation composed with the $g/2$ embedding of $D$ into
  $\Mat_2 (\CC)$ respectively;
\item a weak polarisation $\NS_D (A) \xrightarrow {\isom} \fc$ that maps polarisations to positive
  elements in $\fc$,
\item a level structure $\eta: \cO_D/\cN \rightarrow A$.
\end{itemize}

Let $G$ be the algebraic group over $K$ determined by $G (K) = D^\times$.  The isomorphism classes
of such Abelian varieties are parametrised by the $\CC$-points of the quaternionic Shimura variety
\begin{equation*}
  Sh_0 (\cN) (\CC):= G (K) \lmod (\cH^{\pm})^g \times G (\AA_{K,f})^\times / U_0 (\cN)
\end{equation*}
where $U_0 (\cN)$ is the product of $U_0 (\cN)_\lambda$ over finite places $\lambda$ of $K$ defined
as follows. When $\lambda$ ramifies in $D$, set $U_0 (\cN)_\lambda$ to be $\cO_{D,\lambda}^\times$;
when $\lambda$ splits in $D$ and $\cO_D/\lambda\cO_D$ is not a composition factor of $ \cO_D/\cN$,
set $U_0 (\cN)_\lambda$ to be $\GL_2 (\cO_{K,\lambda})$; when $\lambda$ splits in $D$ and
$\cO_D/\lambda\cO_D$ is a composition factor of $\cO_D/\cN$, set $U_0 (\cN)_\lambda$ to be subgroup
of $\GL_2 (\cO_{K,\lambda})$ with lower-left element in $\varpi_{K,\lambda}\cO_{K,\lambda}$. The
reduced norm map
\begin{equation*}
  G(K) \lmod  G (\A_K) / U_\infty U_0 (\cN) \rightarrow K^\times\lmod \A_K^\times /  \A_{K,\infty}^+\prod_{\lambda \nmid \infty} \cO_{K,\lambda}^\times.
\end{equation*}
is surjective by Eichler's theorem and this shows that $Sh_0 (\cN) (\CC)$ has $\#\Cl^+ (K)$
connected components.

Now we bring the local trees into play. As $A\otimes_{\cO_K}\cI$'s for $\cI\in\Cl (K)$ correspond to the same
vertex in each $\lambda$-local tree, we need to consider the action of $\Cl (K)$ on $ Sh_0 (\cN)$.  The action
of $\Cl (K)$ on the isogeny $A \rightarrow A'$ with kernel isomorphic to $\cO_E/\cN$ is given by
\begin{equation*}
  (A\rightarrow A') \mapsto (A\otimes_{\cO_K}\cI \rightarrow A'\otimes_{\cO_K}\cI).
\end{equation*}
This changes the polarisation module $\fc$ to $\fc\cI^{-4}$. We also need to consider the flipping
of central edges in $\lambda$-local trees.
% note: original Atkin-Lehner ops only flips central edge for ramified places.
For each $\lambda$ in the support of $\cO_D/\cN$, we get an action:
\begin{equation*}
  w_\lambda: (A\xrightarrow {f} A') \mapsto (A/\eta (\cO_D/\cM_\lambda)\rightarrow A'/f (A [\lambda])).
\end{equation*}
where $\cM_\lambda$ is the `$\lambda$-part' of $\cN$ such that
$\cO_D/\cM_\lambda \isom \cO_{D,\lambda}/\cN_\lambda$.  As another application of $w_\lambda$ gives
$(A/A [\lambda]\rightarrow A'/A' [\lambda])$, we see that it is an involution on
$\Cl (K)\lmod Sh_0 (\cN)$.

\begin{defn}
  \label{defn:extended-atkin-lehner-D}
  Define the extended Atkin-Lehner group to be the group generated by $\Cl (K)$ and $w_\lambda$'s
  for $\lambda$ in the support of $\cO_D/\cN$ and denote it by $\tilde {W}_\cN$. Set
  $Sh^+ (\cN) = \tilde {W}_\cN \lmod Sh_0 (\cN)$.
\end{defn}
The analogous theorems for the quaternionic case are as follows.

\begin{thm}
  Let $A\in\cA (\cO_D)$ be a $k$-virtual Abelian variety. Then there exists an Abelian variety $A_0$
  that is $\cO_D$-linearly isogenous to $A$ such that the Galois orbit $\Gal (\bar {k}/k).A_0$ is
  contained in the extended Atkin-Lehner orbit $\tilde {W}_\cN.A_0$.
\end{thm}

\begin{thm}
  \label{thm:moduli_of_virtual_av_D}
  Every $k$-point on the quaternionic Shimura variety $Sh^+(\cN)$ gives rise to a $k$-virtual
  Abelian variety of $\GL_1 (D)$-type. Conversely for any $k$-virtual Abelian variety $A$ of
  $\GL_1 (D)$-type with endomorphism algebra isomorphic to $D$, there exists a $k$-virtual Abelian
  variety $A_0$ of $\GL_1(D)$-type that is $D$-linearly isogenous to $A$ such that it corresponds to
  a $k$-rational point on $Sh^+(\cN (A))$ with $\cN (A)=\cap_{\cM \in \Sigma (A)} \cM$.
\end{thm}

\section{Classification of Hilbert Modular Surfaces}
\label{sec:class-hilb-modul}
We would like to apply the Enriques-Kodaira classification to our moduli spaces of $k$-virtual Abelian varieties when they are Hilbert modular surfaces. The main
reference is van der Geer's book\cite{MR930101}. See also the many works of Hirzebruch and his joint
works with Van de Ven or Zagier on Hilbert modular surfaces that date before it,
for example, \cite{MR0393045,MR0364262,MR0480356}. According to Lang's conjecture, we do not expect
to have many rational points on varieties of general type. Thus such classification will give us
some rough idea where $k$-virtual Abelian varieties can be found in more abundance. More detailed
analysis of the Hilbert modular varieties and the quaternionic Shimura varieties will be part of our future work.

In this section, we will focus on the case where $E$ is a real quadratic field with narrow class
number $1$ and study the Hilbert modular surfaces $Y_0^+(\fp)$ where $\fp$ is a prime ideal of
$E$. This assumption is always in effect. To avoid too much repetition, we will omit it from our
statements. We keep the notation from Sec.~\ref{sec:moduli}.

First we note some implications of the assumption that $|\Cl^+(E)| =1$. Suppose $E=\Q(\sqrt{D})$
where $D$ is the discriminant.  Then $D$ is necessarily either a prime congruent to $1$ modulo $4$
or $D=8$. The torsion-free part of the group of units $\cO_E^\times$ is generated by an element with
norm equal to $-1$. Thus in our case $\PSL_2(\cO_E)=\PGL_2^+(\cO_E)$. Let $\Gamma_0^E (\fp)$ denote
the subgroup of elements in $\PSL_2 (\cO_E)$ whose lower-left entry is congruent to $0$ modulo
$\fp$.    Since, a
fortiori, the class group of $E$ is trivial, the group $\tilde{W}_{E,\fp}$ in
Definition~\ref{defn:extended-atkin-lehner} is a group of order $2$.  For the sake of brevity, we
will denote it by $W$. More precisely, let $\fp= (\varpi_\fp)$ with $\varpi_\fp$ chosen to be
totally positive. Then $W$ is generated by the involution on $\cH^2$ given by the action of the
element $w_\fp=\bigl( \begin{smallmatrix}
  0 &1\\
  -\varpi_\fp &0
\end{smallmatrix} \bigr)$. Hence $Y_0^+(\fp)$ is isomorphic to $W\Gamma_0^E(\fp) \lmod \cH^2$.

Let $\bar {Y}_0^+ (\fp)$ be the
compactification of $Y_0^+ (\fp)$ which is given by $W\Gamma_0^E(\fp) \lmod \cH^2 \cup \PP^1(E)$.  Let $X_0^+(\fp)$ denote the minimal desingularisation of
$\bar {Y}_0^+ (\fp)$. The book of van der Geer\cite{MR930101} on Hilbert modular surfaces does not
consider level structure, so it does not cover our case. However we do rely heavily on its
techniques. We are able to show that most surfaces in question are of general type. We will also
give some examples of surfaces that are not of general type. First we review how one resolves 
singularities on the surfaces and then estimate the Chern numbers.

\subsection{Cusp Singularities}
\label{sec:cusp_sing}

For $\Gamma_0^E(\fp)\lmod \uhp^2 \cup \PP^1(E)$ there are two inequivalent cusps $0$ and
$\infty$. They are identified via the Atkin-Lehner operator $w_\fp$. The isotropy group of the
unique inequivalent cusp $\infty$ in $W\Gamma_0^E(\fp)$ is equal to that in $\PSL_2(\cO_E)$, as
$W\Gamma_0^E(\fp)$ contains all of those elements in $\PSL_2(\cO_E)$ that are of the form
$\bigl( \begin{smallmatrix}
  a &b\\
  0 &d
\end{smallmatrix} \bigr)$.  Thus the type of the cusp singularity is the same as that for
$\PSL_2(\cO_E)$ and the isotropy group is equal to
\begin{equation}
  \begin{split}
    &\left\lbrace
      \begin{pmatrix}
        \epsilon & \mu\\
        0 &\epsilon^{-1}
      \end{pmatrix}
      \in \SL_2(E) : \epsilon \in \cO_E^{\times},\mu\in \cO_E
    \right\rbrace / \{\pm I\}\\
    \isom&\left\lbrace
      \begin{pmatrix}
        \epsilon & \mu\\
        0 &1
      \end{pmatrix}
      \in \GL_2^+(E) : \epsilon \in \cO_E^{\times +},\mu\in \cO_E
    \right\rbrace\\
    \isom &\ \cO_E \rtimes \cO_E^{\times +}.
  \end{split}
\end{equation}
By \cite[Chapter II]{MR930101} we have the minimal resolution of singularity resulting from toroidal
embedding and the exceptional divisor consists of a cycle of $\PP^1$'s or of one rational curve with
one ordinary double point.

\begin{defn}
  Let $C_1,\ldots , C_m$ be rational curves on a non-singular surface and let $b_1, \ldots , b_m$ be
  integers. If $m\ge 2$, we require that $C_1,\ldots , C_m$ are non-singular. If $m=1$, we require
  that $C_1$ is a rational curve with one ordinary double point. Set $C_0=C_m$. We say
  $C_1,\ldots , C_m$ form a cycle of type $[b_1, \ldots , b_m]^\circ$ if the following hold.
  \begin{enumerate}
  \item When $m\ge 3$, the intersection number $C_i.C_j$ is equal to $1$, if $|i-j|=1$, to $-b_i$ if
    $i=j$ and to $0$ otherwise;
  \item When $m=2$, the intersection number $C_i.C_j$ is equal to $2$, if $|i-j|=1$, to $-b_i$ for
    $i=j$;
  \item When $m=1$, the intersection number $C_1.C_1$ is equal to $-b_1 +2$.
  \end{enumerate}
\end{defn}

\subsection{Elliptic Fixed Points}
\label{sec:ell_pt}
Now consider the inequivalent elliptic fixed points of $W\Gamma_0^E(\fp)$ on $\cH^2$. More generally
we consider the elliptic fixed points of $\PGL_2^+(E)$. Suppose $z=(z_1,z_2)$ is fixed by
$\alpha=(\alpha_1,\alpha_2)$ in the image of $\PGL_2^+(E)$ in $\PGL_2^+(\R)^2$. Then
\begin{equation*}
  \alpha_j
  .z_j = \frac{a_jz_j + b_j}{c_jz_j +d_j}=z_j
\end{equation*}
for $j=1$ or $2$ where $\alpha_j=\bigl(
\begin{smallmatrix}
  a_j &b_j\\c_j&d_j
\end{smallmatrix}\bigr)$. Solving the equation we get
\begin{equation}\label{eq:ell-pt-coords}
  z_j= \frac{a_j-d_j}{2c_j}+ \frac{1}{2|c_j|} \sqrt{\tr^2 (\alpha_j) -4\det(\alpha_j)}.
\end{equation}
Transform $z_j$ to $0$ via the m\"obius transformation
$\zeta_j \mapsto \frac{\zeta_j-z_j}{\zeta_j-\bar{z}_j}$ of $\CC$. Then the isotropy group of $z_j$
acts as rotation around $0$ on each factor $\CC$. The action of $\alpha_j$ transfers to that of
$\gamma_j\alpha_j\gamma_j^{-1}$ where $\gamma_j = \smatrix{1}{-z_j}{1}{-\bar {z_j}}$. A little
computation shows that
\begin{equation*}
  \gamma_j\alpha_j\gamma_j^{-1} = (z_j - \bar {z}_j)^{-1}
  \begin{pmatrix}
    -a_j\bar {z}_j + cz_j\bar {z}_j + d_jz_j - b_j & 0\\
    0 & a_j z_j - c_jz_j\bar {z}_j - d_j\bar {z}_j + b_j
  \end{pmatrix}.
\end{equation*}
Using the equation that $z_j$ satisfies we get that the above is equal to
\begin{equation*}
  (z_j - \bar {z}_j)^{-1}
  \begin{pmatrix}
    (a_j-c_jz_j) (z_j - \bar {z}_j) & 0\\
    0 & (a_j-c_j\bar {z}_j) (z_j - \bar {z}_j)
  \end{pmatrix}.
\end{equation*}
Thus the rotation angle is twice the argument $\theta_j$ of $a_j-c_jz_j$ which satisfies
\begin{equation}
  \label{eq:rotation}
  \cos\theta_j=\frac{\tr(\alpha_j)}{2\sqrt{\det(\alpha_j)}}, \quad
  c_j\sin\theta_j <0.
\end{equation}
The isotropy group of an elliptic point is cyclic.
\begin{defn}
  We say that the quotient singularity at $(z_1,z_2)\in\cH^2$ is of type $(n;a,b)$ if after
  transferring $(z_1,z_2)$ to $(0,0)$ as above, a generator of the isotropy group acts as
  $(w_1,w_2)\mapsto (\zeta_n^a w_1, \zeta_n^b w_2)$ where $\zeta_n$ is a primitive $n$-th root of
  $1$.
\end{defn}
\begin{rmk}
  Of course, some types are equivalent. We may require that at least one of $a$ and $b$ is coprime
  to $n$.  When $a$ is coprime to $n$, we may require that $a$ is equal to $1$ by changing the
  chosen primitive $n$-th root of $1$. In fact, in the situation we encounter later, both $a$ and
  $b$ will be coprime to $n$. Then the quotient singularity is an isolated singularity. It is shown
  in \cite[Section 6, Chapter II]{MR930101} that the exceptional divisor in the resolution of cyclic
  quotient singularity is a chain of $\PP^1$'s. The table on page~65 of \cite{MR930101} gives some
  explicit examples.
\end{rmk}
\begin{defn}
  Let $C_1,\ldots , C_d$ be non-singular rational curves on a surface $S$. Assume that $C_i^2=-c_i$
  for $i=1,\ldots , d$, $C_{i-1}.C_i=1$ for $i=2,\ldots , d$ and that the rest of the intersection
  numbers involving these non-singular rational curves are $0$. Then we say that $C_1,\ldots , C_d$
  form a chain of type $[c_1,\ldots , c_d]$.
\end{defn}
\begin{rmk}\label{rmk:quot-resol}
  With this definition we can be more precise about the exceptional divisor coming from the
  resolution of cyclic quotient singularity. For cyclic quotient singularity of type $(n;1,1)$, the
  exceptional divisor is of type $[n]$; for quotient singularity of type $(n;1,-1)$, the exceptional
  divisor is of type $[2,\ldots , 2]$ where $2$ appears $n-1$ times.
\end{rmk}

\begin{defn}
  Let $\Gamma$ be a discrete subgroup of $\PGL_2^+ (\R)^2$. Let $a_n^{\pm} (\Gamma)$ denote the
  number of $\Gamma$-inequivalent elliptic points of type $(n;1,\pm 1)$.  When $n=2$, simply set
  $a_2(\Gamma) = a_2^+ (\Gamma) =a_2^-(\Gamma)$.
\end{defn}
Now we restrict to the elliptic fixed points of $\Gamma_0^E (\fp)$. For there to be any, we need
$\tr^2 (\alpha_j) < 4 \det (\alpha_j) =4$ for $j=1,2$ for some $\alpha\in\Gamma_0^E (\fp)$. For
varying discriminant $D$ of the real quadratic field $E$, the only possible values $\tr (\alpha_j)$
can assume are:
\begin{equation*}
  0, \pm 1, \pm \sqrt {2}, \pm \sqrt {3}, (1\pm \sqrt {5})/2.
\end{equation*}
Then from the expression \eqref{eq:rotation} for $\cos (\theta_j)$, we get:
\begin{lemma}
  When the discriminant $D$ is greater than $12$,  the elliptic elements of $\Gamma_0^E(\fp)$ can
  only be of order $2$ or $3$.
\end{lemma}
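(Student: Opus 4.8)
The plan is to reduce the whole question to the trace of the element and then to an elementary bound coming from cyclotomic fields. Let $\alpha$ be an elliptic element of $\Gamma_0(\fp)$; since $\Gamma_0(\fp)\subset\PSL_2(\cO_E)$, I would lift $\alpha$ to $\tilde\alpha\in\SL_2(\cO_E)$, so that $t:=\tr\tilde\alpha\in\cO_E$ is well defined up to sign, with eigenvalues $\lambda,\lambda^{-1}$ satisfying $t=\lambda+\lambda^{-1}$. Because $\alpha$ lies in the (finite cyclic) isotropy group of its elliptic fixed point, it has finite order $n>1$, so $\tilde\alpha^{\,n}=\pm I$ in $\SL_2(\overline\Q)$; hence $\lambda^{2n}=1$, i.e.\ $\lambda$ is a primitive $m$-th root of unity for some $m$, and $t=\zeta_m+\zeta_m^{-1}\in\Q(\zeta_m)^+$. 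Note $t$ automatically lies in $(-2,2)$ at both real places, since $\lambda$ has absolute value $1$; the cases $m\in\{1,2\}$ give $t=\pm2$ with $\alpha$ parabolic or trivial, contrary to ellipticity.

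Next I would invoke $[\Q(\zeta_m)^+:\Q]=\phi(m)/2$ for $m\ge 3$ together with $\Q(t)\subseteq E$, so that $\phi(m)/2\le[E:\Q]=2$, i.e.\ $\phi(m)\le 4$. This forces $m\in\{3,4,5,6,8,10,12\}$. For $m\in\{5,10\}$ one has $\Q(t)=\Q(\sqrt5)$, for $m=8$ one has $\Q(t)=\Q(\sqrt2)$, and for $m=12$ one has $\Q(t)=\Q(\sqrt3)$; since $\Q(t)\subseteq E=\Q(\sqrt D)$, these can occur only for $D\in\{5,8,12\}$, all excluded by the hypothesis $D>12$. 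The surviving values are $m=3$ ($t=-1$), $m=4$ ($t=0$), $m=6$ ($t=1$), all with $t\in\Q$, and a direct check of the rotation factor $\lambda^2$ at each archimedean place shows these give elements of order $3$, $2$, $3$ in $\PSL_2$ respectively. Thus every elliptic element has order $2$ or $3$.

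The only point needing a little care is the step "$\alpha$ elliptic $\Rightarrow$ $\tilde\alpha$ has $\tilde\alpha^{\,n}=\pm I$ with both components genuine rotations": for this I would observe that a non-scalar $\tilde\alpha\in\SL_2(\cO_E)$ cannot have $\alpha_j=\pm I$ for either embedding $j$ (the embeddings $E\hookrightarrow\R$ are injective), so the conclusion $\lambda$ a root of unity is legitimate; in any case for the surviving cases $t\in\{-1,0,1\}$ this is evident. I do not anticipate a real obstacle here — this is the classical Niven/cyclotomic argument bounding the order of an elliptic element of a Hilbert modular group, specialized to a quadratic field $E$ and sharpened by tracking exactly which quadratic field the trace $t$ generates.
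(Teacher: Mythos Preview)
Your argument is correct and is essentially the route the paper has in mind: from the rotation formula $\cos\theta_j=\tr(\alpha_j)/(2\sqrt{\det\alpha_j})$ with $\det\alpha_j=1$, the trace equals $2\cos\theta_j=\zeta_m+\zeta_m^{-1}\in\cO_E$, and then the cyclotomic degree bound $\phi(m)/2\le 2$ together with the identification of $\Q(\zeta_m)^+$ for $m\in\{5,8,10,12\}$ rules out all orders beyond $2$ and $3$ once $D>12$. The paper records this only as an immediate consequence of the expression for $\theta_j$, and your write-up simply makes that deduction explicit.
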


\subsection{Estimation of Chern Numbers}
\label{sec:Chern_num}

Let $X_0 (\fp)$ be the minimal desingularisation of $\Gamma_0^E(\fp) \lmod \cH^2 \cup
\PP^1(E)$. Then it is simply-connected since there is no non-trivial Hilbert modular form of weight
$(2,0)$ or $(0,2)$\cite [Lemma~6.3] {MR930101}. The Atkin-Lehner operator $w$ extends to an
involution on $X_0 (\fp)$ which has at least one fixed point. Thus we see that the quotient
 $X_0^+ (\fp)$ is simply-connected. Equivalently $X_0^+ (\fp)$ is
a surface with vanishing irregularity. We rely on the table \cite [Table~10, Page 244] {MR2030225}
which gives the Enriques-Kodaira classification for minimal surfaces. For easier reference, we record
in Table~\ref{table:kodaira} the rows where the first Betti number $b_1$ can possibly be zero. Let
$c_i$ be the $i$-th Chern class. The Chern class $c_i(S)$ of a surface $S$ is the Chern class of the
tangent bundle. Let $\chi$ denote the Euler characteristic and $p_a$ denote the arithmetic genus.
\begin{table}
  \centering
  \caption {Classification of Surfaces.\label{table:kodaira}}
  \begin{tabular}{|c|c|c|c|}
    \hline
    Class of Surface & Kodaira Dimension & $c_1^2$ & $c_2$\\
    \hline
    minimal rational surface & $-\infty$ & $8$ or $9$ & $4$ or $3$\\
    Enriques surface & $0$ & $0$ & $12$\\
    K3 surface & $0$ & $0$ & $24$\\
    minimal honestly elliptic surface & $1$ & $0$ & $\ge 0$\\
    minimal surface of general type & $2$ & $>0$ & $>0$\\
    \hline                                                
  \end{tabular}
\end{table}
It is not known if $X_0^+ (\fp)$ is a minimal surface. As blowing down an exceptional curve
increases $c_1^2 (X_0^+ (\fp))$ by $1$ and leaves
$\chi= (c_1^2 (X_0^+ (\fp)) + c_2 (X_0^+ (\fp)))/12$ invariant, we have the following criterion.

\begin{prop}
  \label{prop:criterion_general_type}
  Let $S$ be a nonsingular algebraic surface with vanishing irregularity. If $\chi > 1$ and
  $c_1^2 (S) >0$, then $S$ is of general type.
\end{prop}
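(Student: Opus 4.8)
The plan is to deduce this from the Enriques--Kodaira classification of algebraic surfaces. First I would extract the geometric genus: writing $q=h^1(\mathcal{O}_S)$ for the irregularity, which vanishes by hypothesis, and $p_g=h^2(\mathcal{O}_S)=h^0(K_S)$ for the geometric genus, one has $\chi=\chi(\mathcal{O}_S)=1-q+p_g$, so the hypothesis $\chi>1$ together with $q=0$ forces $p_g=\chi-1\ge 1$. Hence $K_S$ is linearly equivalent to an effective divisor and the Kodaira dimension satisfies $\kappa(S)\ge 0$; in particular $S$ is neither rational nor ruled.

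Next I would reduce to the minimal case. Since $S$ is not ruled it has a minimal model $S_{\min}$, obtained from $S$ by a sequence of $k\ge 0$ blow-downs; this preserves the birational invariants $q$ and $\chi=\chi(\mathcal{O}_S)$ as well as $\kappa$, while $c_1^2(S_{\min})=c_1^2(S)+k\ge c_1^2(S)>0$. As being of general type is a birational property, it suffices to prove the statement for $S_{\min}$, so I may assume $S$ is minimal with $\kappa(S)\ge 0$ and $K_S^2=c_1^2>0$.

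It then remains to rule out $\kappa(S)=0$ and $\kappa(S)=1$, and here the positivity $K_S^2>0$ is decisive. If $\kappa(S)=0$ then, $S$ being minimal, $K_S$ is numerically trivial (the abelian, K3, Enriques and bielliptic cases), whence $K_S^2=0$, a contradiction. If $\kappa(S)=1$ then $S$ is a minimal properly elliptic surface and, by the canonical bundle formula, $K_S$ is numerically proportional to a fibre class $F$ with $F^2=0$, so again $K_S^2=0$, a contradiction. Therefore $\kappa(S)=2$, i.e.\ $S$ is of general type. The only non-trivial input is the classification fact that a minimal surface of Kodaira dimension $0$ or $1$ has $K_S^2=0$; granting this (as in \cite{MR930101}), the argument needs no computation, and the only real ``obstacle'' is having the structure theory of surfaces at hand.
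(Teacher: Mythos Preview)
Your argument is correct and is the standard route via the Enriques--Kodaira classification: from $q=0$ and $\chi>1$ you extract $p_g\ge 1$, hence $\kappa\ge 0$; passing to the minimal model only increases $c_1^2$, and minimal surfaces with $\kappa\in\{0,1\}$ have $K^2=0$, forcing $\kappa=2$.

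Note, however, that the paper does not supply its own proof of this proposition: it is simply quoted as a criterion ``found on page 171 of \cite{MR930101}'' (van der Geer's book on Hilbert modular surfaces) and used as a black box in the subsequent estimation of Chern numbers. So there is no alternative argument in the paper to compare with; your write-up is exactly the kind of justification one would give if asked to unpack the cited reference.
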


Now we will estimate the Chern numbers of $X_0^+(\fp)$. We begin by defining the local Chern cycle.

\begin{defn}
  Let $S$ be a normal surface with isolated singular points and let $S'$ be its
  desingularisation. Suppose $p$ is a singular point on $S$ and the irreducible curves
  $C_1,\ldots,C_m$ on $S'$ form the resolution of $p$. Then the local Chern cycle of $p$ is defined
  to be the unique divisor $Z=\sum_{i=1}^m a_i C_i$ with rational numbers $a_i$ such that the
  adjunction formula holds:
  \begin{equation*}
    Z.C_i-C_i.C_i=2-2p_a(C_i).
  \end{equation*}
\end{defn}
\begin{rmk}
  \label{rmk:local-chern-cycle}
  We can be precise about what the exceptional divisors are for cyclic quotient singularities and
  cusp singularities.

  For cyclic quotient singularity of type $(n;1,1)$, the exceptional divisor is of type $[n]$ and
  consists of one non-singular rational curve $C_1$; the local Chern cycle is $(1-2/n)C_1$. For
  quotient singularity of type $(n;1,-1)$, the exceptional divisor consists of a chain of
  non-singular rational curves $C_1,\ldots , C_{n-1}$ of type $[2,\ldots , 2]$; the local Chern
  cycle is $0$.

  For cusp singularity, the exceptional divisor consists of a cycle of rational curves
  $C_1,\ldots , C_m$ of type $[b_0,\ldots, b_m]^{\circ}$ for some integer $m\ge 1$; the local Chern
  cycle is $\sum_{i=1}^m C_i$.
\end{rmk}

As we will make frequent comparison to the surfaces associated to the full Hilbert modular group
$\PSL_2(\cO_E)$, we set up some notation to facilitate the analysis. Let
$\Gamma\subset \PGL_2^+(\R)^2$ be commensurable with $\PSL_2(\cO_E)$. Set $Y_\Gamma$ to be the
quotient $\Gamma\lmod\cH^2$ and let $X_\Gamma$ be the minimal desingularisation of
$\overline{\Gamma \lmod \uhp^2}$.  In this notation our Hilbert modular surface $X_0^+ (\fp)$ is
$X_{W\Gamma_0^E (\fp)}$. As is computed on page 64 of \cite{MR930101} we have the following with a
slight change of notation:

\begin{thm}
  The Chern numbers for $X_\Gamma$ are given as follows:
  \begin{align}
    \label{eq:chern_number}
    c_1^2(X_\Gamma)&=2 \vol(\Gamma \lmod \uhp^2)+ c + \sum
                     a(\Gamma;n;a,b)c(n;a,b),\\
    c_2(X_\Gamma)&=\vol(\Gamma\lmod\uhp^2)+l+\sum
                   a(\Gamma;n;a,b)(l(n;a,b)+ \frac{n-1}{n})
  \end{align}
  where $a(\Gamma;n;a,b)$ is the number of quotient singularity of
  $\Gamma\lmod\uhp^2$ of type $(n;a,b)$; for a quotient
  singularity of type $(n;a,b)$, $c(n;a,b)$ is the self-intersection number of the local Chern cycle,
  $l(n;a,b)$ is the number of curves in the resolution; $c$ is the sum of the self-intersection number of the local Chern
  cycles of cusp singularities and $l$ is sum of number of curves in the resolution of cusps.
\end{thm}
We also record a theorem of Siegel on the volume of Hilbert modular varieties. See \cite
[Theorem~IV.1.1] {MR930101}.
\begin{thm}
  Let $E$ be a totally real field of degree $n$ over $\Q$. Let $\omega$ be the invariant volume form
  on $\cH^n$:
  \begin{equation*}
    (-1)^n\frac{1}{(2\pi)^n}\frac{dx_1\wedge dy_1}{y_1^2}\wedge\cdots\wedge\frac{dx_n\wedge dy_n}{y_n^2}.
  \end{equation*}
  Then
  \begin{equation}
    \vol(\PSL_2(\cO_E)\lmod\uhp^2) := \int_{\PSL_2(\cO_E)\lmod\uhp^2} \omega =2\zeta_E(-1).   
  \end{equation}
\end{thm}

Now we will estimate the Chern numbers under the assumption that $D>12$. This ensures that we only
have elliptic points of type $(2;1,1)$ or $(3;1,\pm 1)$ for $\Gamma_0^E(\fp)$ and hence only
elliptic points of type $(2;1,1)$, $(3;1,\pm 1)$, $(4;1,\pm 1)$ or $(6;1,\pm 1)$ for
$W\Gamma_0^E(\fp)$. From \cite[II. 6]{MR930101} as summarised in Remark~\ref{rmk:quot-resol}, we
know how the elliptic points are resolved and can compute the self-intersection number of local
Chern cycles. Thus after we plug in the values, equation \eqref{eq:chern_number} reads
\begin{equation}
  \begin{split}
    c_1^2(X_{W\Gamma_0^E(\fp)})=& \frac{1}{2}[\PSL_2(\cO_E):\Gamma_0^E(\fp)]4\zeta_E(-1)+ c \\
    &-\frac{1}{3}a_3^+ (W\Gamma_0^E(\fp)) -a_4^+(W\Gamma_0^E(\fp))-\frac{8}{3}a_6^+(W\Gamma_0^E(\fp));\\
    c_2(X_{W\Gamma_0^E(\fp)})=& \frac{1}{2}[\PSL_2(\cO_E):\Gamma_0^E(\fp)]2\zeta_E(-1)+l
    +(1+\frac{1}{2})a_2(W\Gamma_0^E(\fp))\\
    &+(1+\frac{2}{3})a_3^+(W\Gamma_0^E(\fp))+(2+\frac{2}{3})a_3^-(W\Gamma_0^E(\fp))
    +(1+\frac{3}{4})a_4^+(W\Gamma_0^E(\fp))\\
    &+(3+\frac{3}{4})a_4^-(W\Gamma_0^E(\fp))+(1+\frac{5}{6})a_6^+(W\Gamma_0^E(\fp))+(5+\frac{5}{6})a_6^-(W\Gamma_0^E(\fp)).
  \end{split}
\end{equation}
  
First we estimate $c_2 (X_{W\Gamma_0^E(\fp)})$.
\begin{lemma}[{\cite[Section VII.5, eq.~(1)]{MR930101}}]
  \label{lemma:zeta-estimate}
  For all fundamental discriminant $D$, $\zeta_E(-1)>\frac{D^{3/2}}{360}$.
\end{lemma}

As $a_2$, $a_3^\pm$, $a_4^\pm$, $a_6^\pm$ and $l$ are non-negative, we get
\begin{prop}
  \label{prop:c_2-estimate}
  $c_2(X_{W\Gamma_0^E(\fp)})>(\norm{\fp}+1)\frac{D^{3/2}}{360}$.
\end{prop}

Now we estimate $c_1^2(X_{W\Gamma_0^E(\fp)})$.  The self-intersection number $c$ of the local Chern
cycle at the cusp is equal to that for $\PSL_2(\cO_E)$ as the isotropy group for the unique cusp in
$W\Gamma_0^E(\fp)$ is the same as that in $\PSL_2(\cO_E)$. Thus we use the results from
\cite{MR930101} directly. There the quantity $c$ is shown to be equal to the negative of the length
of the cycle in \cite [eq.~(7), II.5] {MR930101} and the length satisfies the inequality below
 \cite [eq.~(2), VII.5] {MR930101}:
\begin{equation}
  \label{eq:local_chern_cycle}
  {c> - \frac {1} {2} \sum_{|x|< \sqrt {D}} \sigma_0 \left(\frac{D-x^2}{4}\right):=
    - \frac{1}{2}\sum_{x^2<D, x^2 \equiv D \pmod{4}}\sum_{a>0, a|\frac{D-x^2}{4}}1}.
\end{equation}
Combining the inequality with that in \cite[Lemma VII.5.3]{MR930101}, we get the following
lemma. Note that the condition that $E$ has narrow class number $1$ is in effect.
\begin{lemma}
  \label{lemma:c-estimate}
  The self-intersection number $c$ of the local Chern cycle of the cusp singularity satisfies
  \begin{equation}
    c> -\frac{1}{2} D^{1/2}\left( \frac{3}{2\pi^2}\log^2 (D) + 1.05\log (D) \right).
  \end{equation}
\end{lemma}
Then we estimate $a_2$ and $a_3^\pm$.
\begin{defn}
  Let $h(D)$ denote the class number of the quadratic field $\Q(\sqrt{D})$ where $D$ is a
  fundamental discriminant.
\end{defn}
\begin{lemma} [{\cite[Lemma VII.5.2]{MR930101}}]
  If $-\Delta < -4$ is a fundamental discriminant then
  $h(-\Delta)\le \frac{\sqrt{\Delta}}{\pi}\log \Delta$.
\end{lemma}
The following is from \cite [page 17] {MR930101}. The other cases listed there are ruled out because
$E$ is assumed to have the narrow class number $1$.
\begin{lemma}
  If $D>12$, then
  \begin{equation}
    \begin{split}
      a_2(\PSL_2(\cO_E))= &h(-4D)\\
      a_3^+(\PSL_2(\cO_E))=& \frac{1}{2}h(-3D).
    \end{split}
  \end{equation}
\end{lemma}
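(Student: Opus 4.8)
The plan is to identify the elliptic fixed points of $\PSL_2(\cO_E)$ on $\uhp^2$ with $\PSL_2(\cO_E)$-conjugacy classes of CM points — equivalently, with conjugacy classes of optimal embeddings into $M_2(\cO_E)$ of orders in quartic CM fields — and then to evaluate these counts by class numbers, following the line of \cite{MR930101} (going back to Prestel and Hirzebruch). By the isotropy analysis of Section \ref{sec:ell_pt}, a point $z\in\uhp^2$ fixed by an order-$n$ element of $\PSL_2(\cO_E)$ is fixed by some $M\in\SL_2(\cO_E)$ whose characteristic polynomial is the minimal polynomial over $\Q$ of a primitive $2n$-th root of unity; then $\cO_E[M]$ is an order $\Lambda$ in the biquadratic CM field $L=E(\zeta_{2n})$, and the rotation type $(n;a,b)$ is the CM type through which $L$ acts on $T_z(\uhp^2)=\C^2$. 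Since $D>12$, $\cO_E^\times$ contains no root of unity other than $\pm1$, so by the lemma of Section \ref{sec:ell_pt} only $n=2,3$ occur, and for these the hypotheses force $\Lambda$ to be essentially the maximal order $\cO_L$ (with at most a harmless adjustment at the prime $2$ when $D=8$, where the contribution is still a single class number and not a sum over suborders); thus for $n=2$ the field is $L=E(i)\supset\Q(\sqrt{-D})$ and for $n=3$ it is $L=E(\zeta_3)\supset\Q(\sqrt{-3D})$.

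Next I would count, for fixed $L$ and a fixed admissible CM type, the $\PSL_2(\cO_E)$-conjugacy classes of embeddings $\cO_L\hookrightarrow M_2(\cO_E)$ realizing that type. By Eichler's theory of optimal embeddings this number is $\#\Pic(\cO_L)$, up to a global factor given by the type number of $M_2(\cO_E)$ and an index built from $\cO_E^\times$ and the norms of units of $\cO_L$; the local embedding numbers for $M_2$ over a local field are all $1$. The hypothesis $\Cl^+(\cO_E)=1$ is precisely what trivializes the global factor: it gives $\Cl(\cO_E)=1$, so $M_2(\cO_E)$ has type number $1$ and every rank-two $\cO_E$-lattice is free, and it forces the fundamental unit of $\cO_E$ to have norm $-1$, which is exactly what makes the unit index collapse. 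Then I would invoke Kuroda's class-number formula for the $V_4$-extension $L/\Q$: two of the three quadratic subfields of $L$, namely $\Q(\sqrt D)$ (which has class number $1$ since $\Cl^+(\cO_E)=1$) and $\Q(i)$ (resp.\ $\Q(\sqrt{-3})$), drop out, and once the unit factor in Kuroda's formula is checked to equal $4$ — again using $D>12$ and the fact that the fundamental unit has norm $-1$ — one gets $\#\Cl(L)=h(\Q(\sqrt{-D}))=h(-4D)$ (resp.\ $\#\Cl(L)=h(\Q(\sqrt{-3D}))=h(-3D)$); here $-4D$ and $-3D$ are, under the stated hypotheses, the discriminants of $\Q(\sqrt{-D})$ and $\Q(\sqrt{-3D})$ respectively.

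It remains to pass from order-$n$ elements to order-$n$ subgroups (i.e.\ to points) and to pin down the type $(3;1,1)$. An order-$2$ subgroup has a unique nontrivial element and there is only the type $(2;1,1)$, so $a_2(\PSL_2(\cO_E))=\#\Cl(L)=h(-4D)$. An order-$3$ subgroup has two nontrivial elements, inverse to one another, fixing the same point and having the same type (the inverse of a $(\zeta_3,\zeta_3)$-rotation is a $(\zeta_3^{-1},\zeta_3^{-1})$-rotation, again of type $(3;1,1)$ after changing the primitive root), so the number of order-$3$ elements of type $(3;1,1)$ must be halved; since the embeddings realizing type $(3;1,1)$ are exactly those whose CM type has reflex field $\Q(\sqrt{-3D})$ — hence $h(-3D)$ of them — one obtains $a_3^+(\PSL_2(\cO_E))=\tfrac12 h(-3D)$. (The complementary type $(3;1,-1)$ has reflex field $\Q(\sqrt{-3})$, of class number $1$, and contributes nothing to $a_3^+$.)

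I expect the main obstacle to be the constant bookkeeping rather than any conceptual step: checking that the unit index in Eichler's optimal-embedding count and the unit factor in Kuroda's formula both collapse exactly — this is where $D>12$ (ruling out extra roots of unity and small-$D$ accidents) and the fact that $\Cl^+(\cO_E)=1$ makes the fundamental unit have norm $-1$ are genuinely used — together with verifying that precisely the CM type with reflex field $\Q(\sqrt{-3D})$ realizes type $(3;1,1)$. The $n=2$ half is the cleaner one, since there is a single type and a single nontrivial element in each isotropy group; the real care is needed for the order-$3$ points.
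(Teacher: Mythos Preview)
The paper does not prove this lemma; it is stated without argument as one of several results quoted from van der Geer \cite{MR930101} (the underlying count of elliptic points for the full Hilbert modular group goes back to Prestel). So there is no in-paper proof to compare against, and your proposal should be read as a reconstruction of the classical argument.

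Your strategy is sound and would succeed, but it takes a longer route than the standard treatment. The usual proof (Prestel; van der Geer, Ch.~I, \S5) sets up a \emph{direct} bijection between $\PSL_2(\cO_E)$-conjugacy classes of order-$n$ elliptic elements and ideal classes in the relevant imaginary quadratic order: one views $\cO_E^2$ with the action of $M$ as a module over $\cO_E[\zeta_{2n}]$ and then projects to an ideal class of $\Z[\sqrt{-D}]$ (for $n=2$) or of the ring of integers of $\Q(\sqrt{-3D})$ (for $n=3$, after separating the two rotation types). The hypotheses $\Cl^+(\cO_E)=1$ and $D>12$ are used exactly where you say --- freeness of rank-two $\cO_E$-lattices, norm $-1$ for the fundamental unit, and absence of extra roots of unity --- but the argument never names the biquadratic field $L$ or invokes Kuroda. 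Your route through $\Cl(L)$ and Kuroda's formula is correct in principle and arrives at the same answer; its cost is precisely the unit-index verification you flag as the main obstacle, which the direct bijection avoids entirely. Your explanation of the factor $\tfrac12$ (two generators per cyclic group of order $3$) is right. The one place to be careful is your identification of which CM type realises $(3;1,1)$: phrasing it via the reflex field is workable but delicate, and in the literature the split is usually made by the sign pattern of the entry $c$ of the isotropy generator across the two real embeddings (totally positive or totally negative gives $(3;1,1)$), which connects more transparently to the module-theoretic bijection.
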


Combining the above two lemmas we get:
\begin{lemma}\label{lemma:full-hilbert-mod-gp-ell-pts-estimates}
  If $D>12$, then
  \begin{equation}
    \begin{split}
      a_2(\PSL_2(\cO_E)) &\le \frac{\sqrt{4D}}{\pi}\log (4D)\\
      a_3^+(\PSL_2(\cO_E)) &\le \frac{\sqrt{3D}}{2\pi} \log(3D).
    \end{split}
  \end{equation}
\end{lemma}
Now we put in level structure.
\begin{lemma}
  \label{lemma:Gamma0_ell_pts_estimates}
  If $D>12$ then
  \begin{equation}
    \begin{split}
      a_2(\Gamma_0^E(\fp)) &\le \frac{3\sqrt{4D}}{\pi}\log (4D)\\
      a_3^+(\Gamma_0^E(\fp))&\le \frac{3\sqrt{3D}}{2\pi} \log(3D).
    \end{split}
  \end{equation}
\end{lemma}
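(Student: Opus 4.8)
The plan is to compare $a_2(\Gamma_0(\fp))$ and $a_3^+(\Gamma_0(\fp))$ with $a_2(\PSL_2(\cO))$ and $a_3^+(\PSL_2(\cO))$ via the degree-$(\norm\fp+1)$ covering
\[\pi: \Gamma_0(\fp)\lmod\uhp^2 \longrightarrow \PSL_2(\cO)\lmod\uhp^2,\]
and then to multiply the estimates of the preceding lemma by $3$. Because $\Gamma_0(\fp)\subseteq\PSL_2(\cO)$, every elliptic point of $\Gamma_0(\fp)$ on $\uhp^2$ is an elliptic point of $\PSL_2(\cO)$, and when $D>12$ the lemma on orders of elliptic elements (its proof, resting only on the formula for $\theta_j$, applies verbatim to $\PSL_2(\cO)$) restricts all elliptic orders of both groups to $\{2,3\}$. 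Consequently an order-$\ell$ elliptic point of $\Gamma_0(\fp)$, $\ell\in\{2,3\}$, must lie over an elliptic point $z$ of $\PSL_2(\cO)$ whose isotropy order is divisible by $\ell$, hence equal to $\ell$; there the two isotropy groups coincide, so the local type is inherited and type $(\ell;1,1)$ maps to type $(\ell;1,1)$. Thus it suffices to show that each fibre $\pi^{-1}(z)$, for $z$ of order $\ell$, contains at most two points that are elliptic of order $\ell$ for $\Gamma_0(\fp)$: this gives $a_\ell(\Gamma_0(\fp))\le 2\,a_\ell(\PSL_2(\cO))$, which together with the preceding lemma yields the stated (weaker) inequalities with the constant $3$.

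For the fibre count I would proceed combinatorially. Write $\Gamma_z=\langle\gamma_0\rangle$ for the order-$\ell$ isotropy group at $z$. Using $\Gamma_0(\fp)\lmod\PSL_2(\cO)\isom\PP^1(\F_\fp)$, the $\Gamma_0(\fp)$-orbits inside the $\PSL_2(\cO)$-orbit of $z$ correspond to the $\Gamma_z$-orbits on $\PP^1(\F_\fp)$; since $\ell$ is prime each such orbit is either a singleton or has $\ell$ elements, the singletons being exactly the points of $\PP^1(\F_\fp)$ fixed by the image $\bar\gamma_0\in\PGL_2(\F_\fp)$ of a lift of $\gamma_0$, and these singleton orbits are precisely the order-$\ell$ elliptic points of $\Gamma_0(\fp)$ in the fibre. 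So the contribution of $z$ equals $\#\,\mathrm{Fix}(\bar\gamma_0,\PP^1(\F_\fp))$. Now a lift $\tilde\gamma_0\in\SL_2(\cO)$ of $\gamma_0$ has characteristic polynomial $t^2+1$, $t^2+t+1$ or $t^2-t+1$, and $\cO[\tilde\gamma_0]=\cO\otimes_\Z\Z[\zeta_{2\ell}]$ is the maximal order of the quadratic extension $E(\zeta_{2\ell})$ of $E$, because $\disc\Q(\zeta_{2\ell})\in\{-4,-3\}$ is prime to $\disc E=D$ (recall $D>12$ is an odd prime). Hence $\cO[\tilde\gamma_0]\otimes_\cO\F_\fp$ is a $2$-dimensional $\F_\fp$-algebra of which $\bar\gamma_0$ is a non-scalar generator, so $\bar\gamma_0$ is a non-scalar element of $\PGL_2(\F_\fp)$ of order $\ell$; such an element fixes at most two points of $\PP^1(\F_\fp)$---two if $\fp$ splits in $E(\zeta_{2\ell})$, one if it ramifies, none if it is inert. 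This bounds every fibre by $2$ and finishes the estimate.

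The one genuinely delicate point is a prime $\fp$ of residue characteristic $\ell$ (one dividing $2$ in the bound for $a_2$, or dividing $3$ in the bound for $a_3^+$): then $\bar\gamma_0$ is unipotent, and the fibre bound could fail only if $\bar\gamma_0$ were the identity of $\PGL_2(\F_\fp)$, i.e.\ if $\cO[\tilde\gamma_0]\otimes_\cO\F_\fp$ degenerated to a $1$-dimensional algebra. This never happens, because $2$ and $3$ are unramified in $E=\Q(\sqrt D)$ ($D$ being an odd prime), so $\cO[\tilde\gamma_0]$ stays maximal and its reduction remains $2$-dimensional, leaving $\bar\gamma_0$ with exactly one fixed point. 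Confirming this, and the routine verification that conjugating by the relevant element of $\PGL_2^+(\R)^2$ preserves the rotation type $(\ell;1,1)$, are essentially the only non-formal steps; the slack between the constant $2$ produced by the argument and the constant $3$ in the statement absorbs these small primes with room to spare.
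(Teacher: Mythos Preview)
Your approach is essentially the same as the paper's: bound each fibre of the cover $\Gamma_0(\fp)\lmod\uhp^2\to\PSL_2(\cO)\lmod\uhp^2$ over an order-$\ell$ elliptic point by showing that a generator $\gamma_0$ of the isotropy does not reduce to a scalar in $\PGL_2(\F_\fp)$. The paper does this with explicit coset representatives $\delta_\alpha=\bigl(\begin{smallmatrix}1&0\\\alpha&1\end{smallmatrix}\bigr)$ and $\delta_\infty$, reducing to the quadratic $c+(a-d)\alpha-b\alpha^2\equiv 0\pmod\fp$, and rules out the degenerate case $b,\,c,\,a-d\in\fp$ by a direct computation modulo~$\fp^2$ using $\det=1$ and $\tr\in\{0,\pm1\}$ together with the fact that $2$ and $3$ are unramified. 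Your route via the maximal order $\cO[\tilde\gamma_0]\cong\cO\otimes_\Z\Z[\zeta_{2\ell}]$ is a more structural way to reach the same conclusion, and in fact gives the sharper fibre bound $2$ rather than the paper's $3$ (the paper counts $\delta_\infty$ separately from the two roots of the quadratic without observing that these cannot all occur simultaneously).

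One imprecision worth tightening: the abstract tensor product $\cO[\tilde\gamma_0]\otimes_\cO\F_\fp$ is \emph{always} $2$-dimensional, since $\cO[\tilde\gamma_0]$ is $\cO$-free of rank~$2$; what you actually need is that its image in $\Mat_2(\F_\fp)$ is $2$-dimensional, i.e.\ that $\F_\fp^2$ is a faithful module over it. This does follow from maximality---$\cO^2$ is then torsion-free over the Dedekind ring $\cO[\tilde\gamma_0]$, hence locally free of rank~$1$, so $\F_\fp^2\cong\cO[\tilde\gamma_0]/\fp\cO[\tilde\gamma_0]$ as modules and $\tilde\gamma_0$ visibly does not act by an element of $\F_\fp$---but this step should be said explicitly rather than folded into the phrase ``degenerated to a $1$-dimensional algebra''.
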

\begin{proof}
  Let $z$ be an elliptic point of $\PSL_2(\cO_E)$ with isotropy group generated by
  $g=\bigl( \begin{smallmatrix} a&b\\ c&d
  \end{smallmatrix} \bigr)$. We have coset decomposition of
  $\PSL_2(\cO_E)=\cup_\alpha \Gamma_0^E(\fp)\delta_\alpha \cup\Gamma_0^E(\fp)\delta_\infty $, where
  $\delta_\alpha=\bigl( \begin{smallmatrix} 1&0\\ \alpha&1
  \end{smallmatrix} \bigr)$ with $\alpha\in\cO_E$ running through a set of representatives of
  $\cO_E/\fp$ and $\delta_\infty=\bigl( \begin{smallmatrix} 0&1\\ -1&0
  \end{smallmatrix} \bigr)$. This elliptic point corresponds to several
  $\Gamma_0^E(\fp)$-inequivalent points: $\delta_\alpha z$'s and $\delta_\infty z$. All elliptic
  points for $\Gamma_0^E(\fp)$ must be one of those. To see which ones of $\delta_\alpha z$'s are
  elliptic points for $\Gamma_0^E(\fp)$, we just need to check if
  $\delta_\alpha g \delta_\alpha^{-1}$ is in $\Gamma_0^E(\fp)$, since we are dealing with elliptic
  points of type $(2;1,1)$ and $(3;1,\pm 1)$ only.  This is equivalent to checking if
  $c-(d-a)\alpha-b\alpha^2$ is in $\fp$. In $\F_\fp$, the equation $c-(d-a)\alpha-b\alpha^2=0$ has
  at most two solutions unless $c, a-d, b \in \fp$. Now we claim that it is not possible to have
  $c, a-d, b \in \fp$. If the claim holds then only two of $\delta_\alpha z$'s can be elliptic
  points for $\Gamma_0^E(\fp)$. Adding in the point $\delta_\infty z$, we get at most three elliptic
  points for $\Gamma_0^E(\fp)$ lying over $z$. Thus the number of elliptic points of a given type
  can increase to at most threefold that for $\PSL_2(\cO_E)$. Combining with the inequalities of
  Lemma~\ref{lemma:full-hilbert-mod-gp-ell-pts-estimates}, we get the inequalities in the statement
  of this lemma.

  It remains to prove our claim. Assume otherwise, i.e., $c, a-d, b \in \fp$.  Since $g$ is elliptic
  and $D>12$, we conclude from Sec.~\ref{sec:ell_pt} that $a+d$ can only take the values $0, \pm
  1$. From $ad-bc=1$, we find that $a^2 \equiv 1 \pmod \fp$. Thus $a\equiv \pm 1 \pmod \fp$. We may
  change the matrix that represents $g$ so that $a\equiv 1 \pmod \fp$. Thus also $d\equiv 1\pmod \fp$. Using $ad-bc=1$ again, we must have
  $a+d \equiv 2 \pmod {\fp^2}$. Since the value of $a+d$ is  $0$ or $\pm 1$, $\fp^2$ divides $(2)$ or $(3)$, but $(2)$ and $(3)$ do not ramify in $E$ as $D>12$.
  We get a contradiction.
\end{proof}

\begin{lemma}
  \label{lemma:ell-pt-estimate}
  Suppose $D>12$.  Then
  \begin{enumerate}
  \item $a_4^+(W\Gamma_0^E(\fp)) =0$ unless $(2)$ is inert in $\cO_E$ and $\fp=(2)$;
  \item when $(2)$ is inert in $\cO_E$ and $\fp=(2)$,
    \begin{equation*}
      a_4^+(W\Gamma_0^E(\fp)) \le \frac{3\sqrt{4D}}{\pi}\log (4D);
    \end{equation*}
  \item $a_6^+(W\Gamma_0^E(\fp))=0$ and
    \begin{equation*}
      \frac{1}{3}a_3^+(W\Gamma_0^E(\fp)) \le \frac{\sqrt{3D}}{4\pi} \log(3D)
    \end{equation*}
    unless $(3)$ is inert in $\cO_E$ and $\fp=(3)$;
  \item when $(3)$ is inert in $\cO_E$ and $\fp=(3)$,
    \begin{equation*}
      \frac{1}{3}a_3^+(W\Gamma_0^E(\fp))+\frac{8}{3}a_6^+(W\Gamma_0^E(\fp))
      \le \frac{4\sqrt{3D}}{\pi} \log(3D).
    \end{equation*}
  \end{enumerate}
\end{lemma}
\begin{proof}
  We check the rotation factor \eqref{eq:rotation}
  \begin{equation*}
    \cos \theta_j = \frac{\tr(\alpha_j)}{2\sqrt{\det(\alpha_j)}} 
  \end{equation*}
  associated to an elliptic element $\alpha\in W\Gamma_0^E (\fp)$. In order to have a point with
  isotropy group of order $4$ in $W\Gamma_0^E(\fp)$ we must have
  $\cos \theta_j = \pm\frac{\sqrt{2}}{2}$. As $D>12$, this can only happen when $\fp=(2)$ and
  $\det(\alpha_j)= \varpi_\fp$ modulo squares in $\cO_E$.  In order to have a point with isotropy
  group of order $6$ in $W\Gamma_0^E(\fp)$ we must have $\cos \theta_j = \pm\frac{\sqrt{3}}{2}$. As
  $D>12$, this can only happen when $\fp=(3)$ and $\det(\alpha_j)= \varpi_\fp$ modulo squares in
  $\cO_E$.
  
  The Atkin-Lehner operator $w$ exchanges some of the $\Gamma_0^E(\fp)$-inequivalent
  $(3;1,1)$-points which result in $(3;1,1)$-points for $W\Gamma_0^E(\fp)$ and fixes the rest of the
  points which result in $(6;1,1)$-points for $W\Gamma_0^E(\fp)$. Thus we get
  \begin{equation}
    2a_3^+(W\Gamma_0^E(\fp))+a_6^+(W\Gamma_0^E(\fp))= a_3^+(\Gamma_0^E(\fp)).
  \end{equation}
  It is easy to see that
  \begin{equation*}
    \frac{1}{3}a_3^+(W\Gamma_0^E(\fp))+\frac{8}{3}a_6^+(W\Gamma_0^E(\fp)) \le \frac{8}{3}a_3^+(\Gamma_0^E(\fp)). 
  \end{equation*}
  Combining with Lemma \ref{lemma:Gamma0_ell_pts_estimates}, we get our estimate.

  The Atkin-Lehner operator exchanges some of the $\Gamma_0^E(\fp)$-inequivalent $(2;1,1)$-points
  which result in $(2;1,1)$-points for $W\Gamma_0^E(\fp)$ and fixes the rest of the points which
  result in $(4;1,1)$-points for $W\Gamma_0^E(\fp)$. All $(4;1,1)$-points for $W\Gamma_0^E(\fp)$
  arise in this way, but we may get extra $(2;1,1)$-points $W\Gamma_0^E(\fp)$ not arising in this
  way. Thus we have
  \begin{equation}
    a_4^+(W\Gamma_0^E(\fp))\le a_2^+(\Gamma_0^E(\fp)).
  \end{equation}
  Combining with Lemma \ref{lemma:Gamma0_ell_pts_estimates}, we get our estimate.
\end{proof}

Combining all these inequalities (Lemmas~\ref{lemma:zeta-estimate}, \ref{lemma:c-estimate},
\ref{lemma:ell-pt-estimate}) we finally arrive at an estimate for $c_1^2(X_0^+(\fp))$.
\begin{prop}\label{prop:c_1-estimate}
  Suppose $D>12$. Then
  \begin{equation}
    \begin{split}
      \label{eq:c1^2-estimate}
      c_1^2(X_0^+(\fp)) &> (\norm\fp +1)\frac{D^{3/2}}{180} -
      \frac{1}{2}D^{1/2}(\frac{3}{2\pi^2}\log^2 D
      +1.05\log D) \\
      &-
      \begin{cases}
        \frac{1}{4\pi}\sqrt{3D}\log(3D) &\text{if $\fp \ne
          (3)$}\\
        \frac{4}{\pi}\sqrt{3D}\log(3D) &\text{if $\fp = (3)$}
      \end{cases}\\
      &-
      \begin{cases}
        0 &\text{if $\fp \ne     (2)$}\\
        \frac{3}{\pi}\sqrt{4D} \log(4D) &\text{if $\fp = (2)$}.
      \end{cases}
    \end{split}
  \end{equation}
\end{prop}
Now that we have inequalities for $c_1^2$ and $c_2$ of the Hilbert modular surfaces, we can check
for what values of $D$ and $n$ these are of general type. For a given $D$, we may bound $c$ more precisely
by using \eqref{eq:local_chern_cycle}.

\begin{thm}\label{thm:general_type}
  Suppose $D>12$ and $\Cl^+(\Q(\sqrt{D}))=1$. Set $n=\norm \fp+1$. Then the Hilbert modular surface
  $X_0^+(\fp)$ is of general type if $D$ or $n$ is sufficiently large or more precisely if the
  following conditions on $D$ and $n$
  are satisfied:\\
  \begin{center}
    \begin{longtable}{|p{7cm}|c|}
      \hline $D\ge 853$ or $D=313$, $337$, $353$, $409$, $433$, $449$, $457$, $521$, $569$, $593$,
      $601$, $617$, $641$, $653$, $661$, $673$, $677$, $701$, $709$, $757$, $769$, $773$, $797$,
      $809$, $821$, $829$ & no constraint on $n$\\\hline $D=241$&$n>3$\\\hline $D=193$&$n>3$\\\hline
      $D=157$, $ 181$, $ 277$, $ 349$, $ 373$, $ 397$, $ 421$, $ 541$, $ 613$ &
      $\fp\neq (2)$\\\hline $D=233$, $281$ & $\fp\neq (3)$\\\hline $D=149$, $173$, $197$, $269$,
      $293$, $317$, $389$, $461$, $509$, $557$ & $\fp\neq (2), (3)$\\\hline $D=137$ & $n>3$ and
      $\fp\neq (3)$\\\hline $D=113$ &$n>4$ and $\fp\neq (3)$\\\hline $D=109$&$n>4$ and
      $\fp\neq (2)$\\\hline $D=101$&$n>3$ and $\fp\neq (2), (3) $\\\hline $D=97$&$n> 6$\\\hline
      $D=89$& $n>5$ and $\fp\neq (3)$\\\hline $D=73$& $n>7$\\\hline $D=61$& $n>6$\\\hline $D=53$&
      $n>7$ and $\fp\neq (3)$\\\hline $D=41$& $n>12$ \\\hline $D=37$& $n>12$ \\\hline $D=29$&
      $n>15$\\\hline $D=17$& $n>32$ \\\hline
      $D=13$&$n>41$  \\
      \hline
    \end{longtable}
  \end{center}
\end{thm}
\begin{proof}
  We note that $n=\norm\fp +1 \ge 3$. By Prop.~\ref{prop:c_2-estimate}, as long as $D>127$,
  $c_2 (X_{W\Gamma_0^E(\fp)})>12$. Next we give a rough estimate for $D$ so that
  $c_1^2 (X_{W\Gamma_0^E(\fp)}) >0$ by using the inequality \eqref{eq:c1^2-estimate}.  When
  $\fp \neq (2)$ or $(3)$, as long as $D>414$, we have $c_1^2 (X_{W\Gamma_0^E(\fp)}) >0$.  When
  $\fp = (2)$, as long as $D>849$, we have $c_1^2 (X_{W\Gamma_0^E(\fp)}) >0$. When $\fp = (3)$, as
  long as $D>384$, we have $c_1^2 (X_{W\Gamma_0^E(\fp)}) >0$. The numerical computation was done in
  SageMath\cite{sagemath}.  We also used it to produce a list of discriminants of real quadratic
  fields with narrow class number $1$. Here is the list up to $853$ which is the smallest one that is greater
  than $849$:
  \begin{align*}
    &5, 8, 13, 17, 29, 37, 41, 53, 61, 73, 89, 97, 101, 109, 113, 137, 149, 157, 173, 181,\\
    &193, 197, 233, 241, 269, 277, 281, 293, 313, 317, 337, 349, 353, 373, 389, 397, 409,\\
    &421, 433, 449, 457, 461, 509, 521, 541, 557, 569, 593, 601, 613, 617, 641, 653, 661,\\
    &673, 677, 701, 709, 757, 769, 773, 797, 809, 821, 829, 853.      
  \end{align*}
  Thus for $D\ge 853$ and any $\fp$ we always have $c_2 (X_{W\Gamma_0^E(\fp)})>12$ and
  $c_1^2 (X_{W\Gamma_0^E(\fp)}) >0$. By Prop.~\ref{prop:criterion_general_type}, these are surfaces
  of general type.

  Next we compute for each of the discriminant $D$ in the list, a sufficient condition on $n$ (or on
  $\fp$) so that $c_1^2 (X_{W\Gamma_0^E(\fp)})+c_2 (X_{W\Gamma_0^E(\fp)})>12$ and
  $c_1^2 (X_{W\Gamma_0^E(\fp)}) >0$ are satisfied. We may use the sharper bound
  \eqref{eq:local_chern_cycle} for the intersection number $c$ of local Chern cycles. We note that
  the formula \eqref{eq:c1^2-estimate} for estimating $c_1^2$ branches when we have $\fp= (2), (3)$.
  Under the constraint of our theorem, $(2)$ is split if and only if $D\equiv 1 \pmod 8$, $(2)$ is
  inert if and only if $D\equiv 5 \pmod 8$; $(3)$ is split if and only if $D\equiv 1 \pmod 3$ and
  $(3)$ is inert if and only if $D\equiv 2 \pmod 3$. For those $D$'s with inert primes $(2)$ or
  $(3)$, we compute the values of $c_1^2$ and $c_2$ to check if we get surfaces of general type or
  not. The numerical results are summarised in the table.
\end{proof}
\begin{rmk}
  We are providing a sufficient condition for the Hilbert modular surface
  $X_0^+ (\fp)= X_{W\Gamma_0^E(\fp)}$ to be of general type. More precise analysis is needed to
  determine the exact type of a given $X_0^+ (\fp)$. We give some examples in the
  Sec.~\ref{sec:examples}.
\end{rmk}

\subsection{Examples}
\label{sec:examples}
The  two examples give rational surfaces.
\subsubsection{$D=5$}
Let $E=\Q(\sqrt{5})$. The Galois conjugation is denoted by ${}^*$.  By the algorithm\cite [eq.~(3)
on page 38] {MR930101}, the cusp resolution at infinity of $\bar {Y}_0^+ (\fp)$ is a cycle of type
$[3]^\circ$, i.e., a rational curve with an ordinary double point and with self-intersection number
$-1$. To find the $W\Gamma_0^E (\fp)$-inequivalent elliptic points, we first consider the
$\PSL_2(\cO_E)$-inequivalent elliptic points which were worked out in \cite [Satz~1]
{MR0229579}. Let $\varepsilon= (1+\sqrt {5})/2$ and $\varepsilon^*= (1-\sqrt {5})/2$.  We list the
type of the elliptic point and a generator of the isotropy group:
\begin{align*}
  (2;1,1), & \smatrix{0}{1}{-1}{0} \quad
  & (2;1,1), & \smatrix{0}{-\varepsilon^*}{-\varepsilon}{0}\\
  (3;1,1), & \smatrix{0}{1}{-1}{1} \quad
  & (3;1,-1), & \smatrix{0}{-\varepsilon^*}{-\varepsilon}{1}\\
  (5;1,3), & \smatrix{0}{1}{-1}{\varepsilon} \quad
  & (5;1,2), & \smatrix{1}{-\varepsilon^*}{\varepsilon^*}{-\varepsilon^*}.
\end{align*}

We have coset decomposition
$\PSL_2(\cO_E) = \cup_{\alpha}\Gamma_0^E (\fp)\delta_\alpha \cup \Gamma_0^E (\fp)\delta_\infty$
where $\delta_\alpha=\bigl( \begin{smallmatrix} 1&0\\ \alpha&1
\end{smallmatrix} \bigr)$ with $\alpha\in\cO_E$ running through a set of representatives of
$\cO_E/\fp$ and $\delta_\infty=\bigl( \begin{smallmatrix} 0&1\\ -1&0
\end{smallmatrix} \bigr)$. Let $z$ be an elliptic point for $\PSL_2(\cO_E)$ and
$\gamma\in\PSL_2(\cO_E)$ be a generator of the isotropy group. Then $\delta_\alpha.z$
(resp. $\delta_\infty.z$) is an elliptic points for $\Gamma_0^E(\fp)$ if and only if
$\delta_\alpha\gamma\delta_\alpha^{-1}$ (resp. $\delta_\infty\gamma\delta_\infty^{-1}$) lies in
$\Gamma_0^E(\fp)$. Write $\gamma$ as $\smatrix{a}{b}{c}{d}$. It is easy to check that
$\delta_\alpha\gamma\delta_\alpha^{-1}\in\Gamma_0^E(\fp)$ if and only if
$b\alpha^2+ (d-a)\alpha -c\in\fp$ and that $\delta_\infty\gamma\delta_\infty^{-1}\in\Gamma_0^E(\fp)$
if and only if $b\in\fp$. Once we get the $\Gamma_0^E (\fp)$-inequivalent elliptic points, we need
to check how the Atkin-Lehner operator $w$ acts on them. We will work this out with a more specific
$\fp$.

\paragraph{$\fp=(2)$}
The Hilbert modular surface $X_0^+ (\fp)$ is a rational surface. We explain below.  We get the
following inequivalent elliptic points for $\Gamma_0^E(\fp)$. Instead of writing out their coordinates,
we write down the type and a generator of the isotropy subgroup of $\Gamma_0^E(\fp)$ that fixes each
elliptic point. The coordinates can be recovered by \eqref{eq:ell-pt-coords}.
\begin{align*}
  (2;1,1), & \bmat{-1}{1}{-2}{1} \quad & (2;1,1), & \bmat{-1}{-\varepsilon^*}{-2\varepsilon}{1}\\
  (3;1,1), & \bmat{-\varepsilon}{1}{-2 (1+\varepsilon)}{1+\varepsilon} \quad & (3;1,1), & \bmat{-\varepsilon^*}{1}{-2 (1+\varepsilon^*)}{1+\varepsilon^*}\\
  (3;1,-1), & \bmat{\varepsilon^*}{-\varepsilon^*}{-2\varepsilon}{\varepsilon} \quad & (3;1,-1), &\bmat{1+\varepsilon^*}{-\varepsilon^*}{2\varepsilon^*}{-\varepsilon^*}.
\end{align*}
It can be checked directly that the Atkin-Lehner operator $w$ fixes the two $(2;1,1)$-points
respectively. Since there cannot exist elliptic points of type $(6;1,\pm1)$ for $W\Gamma_0^E (\fp)$,
we see that $w$ must exchange the two $(3;1,1)$- (resp. $(3;1,-1)$-) points. We get one $(4;1,1)$-,
one $(4;1,-1)$-, one $(3;1,1)$-, one $(3;1,-1)$- and possibly some new $(2;1,1)$-points.

We consider certain Hirzebruch cycles on the Hilbert modular surface. Set
\begin{equation}\label{eq:F_B}
  \tilde{F}_B=\left\lbrace (z_1,z_2) \in \cH^2  \cup \PP^1(E) :
    \begin{pmatrix}
      z_2&1
    \end{pmatrix} B
    \begin{pmatrix}
      z_1\\1
    \end{pmatrix}=0 \right\rbrace
\end{equation}
where $B$ is a skew-Hermitian matrix in $M_2 (E)$, i.e., $\trpz {B^*} = B$.  Let $F_B$ denote the
strict transform in $X_0^+(\fp)$ of the image of $\tilde {F}_B$ in $Y_0^+ (\fp)$.

Let $B=\smatrix{0}{\sqrt {5}\varepsilon^*}{\sqrt {5}\varepsilon}{0}$. The $(3;1,-1)$-point can be
represented by $(-(\sqrt {5}+ i\sqrt {3})\varepsilon^*/4, (\sqrt {5}+ i\sqrt {3})\varepsilon/4)$, so
obviously it lies on $F_B$. The $(4;1,-1)$-point can be represented by
$((1+i)/ (2\varepsilon), (-1+i)/ (-2\varepsilon^*))$.  After applying translation by
$\smatrix{1}{-1}{0}{1}$, we get the $\Gamma_0^E(\fp)$-equivalent elliptic point
$((-\sqrt {5}+i)/ (2\varepsilon), (-\sqrt {5}+i)/ (-2\varepsilon^*))$. Thus we see that the
$(4;1,-1)$-point also lies on $F_B$.  The stabiliser $\Gamma_B$ of $\tilde {F}_B$ in
$\Gamma_0^E(\fp)$ consists of those elements $\gamma$ such that $\trpz{\gamma^*}B\gamma = \pm
B$. Thus $\Gamma_B$ is the degree $2$ extension of the group
\begin{equation}\label{eq:Gamma_B-base}
  \left\{
    \smatrix{a}{b}{c}{d}\in \Gamma_0^E(\fp):
    a, d \in \Z, c\in
    2\varepsilon\sqrt{5}\Z, b\in \varepsilon^*\sqrt{5}\Z
  \right\}
\end{equation}
generated by $\smatrix{\sqrt {5}}{2\varepsilon^*}{2\varepsilon}{\sqrt {5}}$.  The stabiliser
$\tilde {\Gamma}_B$ of $\tilde {F}_B$ in $W\Gamma_0^E(\fp)$ is a degree $2$ extension of $\Gamma_B$
by $\smatrix{2}{\varepsilon^*\sqrt {5}}{2\varepsilon\sqrt {5}}{-4}$. Note that the group
\eqref{eq:Gamma_B-base} is isomorphic to $\Gamma_0^{\Q} (10)$ which is the congruence subgroup of
$\SL_2 (\Z)$ with lower-left entry congruent to $0$ modulo $10$. As
$\Gamma_0^{\Q} (10\Z)\lmod\uhp \cup \PP^1 (\Q)$ is isomorphic to $\PP^1$, the non-singular model of
$F_B$ is isomorphic to $\PP^1$. Let $sw$ denote the involution on $\bar {Y}_0^+ (\fp)$ induced by
swapping coordinates on $\cH^2$: $(z_1,z_2)\mapsto (z_2,z_1)$. It can be extended to an involution
on $X_0^+ (\fp)$. If $(z_1,z_2)$ is a point satisfying \eqref{eq:F_B} then $(z_2,z_1)$ is
$\Gamma_0^E (\fp)$-equivalent to $(z_1,z_2)$ via $\smatrix{-\varepsilon^*}{}{}{\varepsilon}$. This
means that $sw$ fixes $F_B$ point-wise. This, in turn, implies that $F_B$ is non-singular. We
conclude that $F_B$ is a non-singular rational curve.

Recall that the cusp resolution is formed by gluing copies of $\CC^2$. Following the method in
\cite[V.2]{MR930101} we can determine the local equation of $F_B$ on each copy of $\CC^2$.  On the
$k$-th copy of $\CC^2$ the coordinates are related by
\begin{equation*}
  2\pi i z_j = A_{k-1}^{(j)} \log (u_k) + A_k^{(j)} \log (v_k)
\end{equation*}
where $z_j$ denote the coordinate on the $j$-th copy of $\cH$ for $j=1,2$ and $(u_k,v_k)$ denote the
coordinates of the $k$-th copy of $\CC^2$ for $k\in \Z$. For the case at hand, we take $A_0=1$ and
$A_1= (3-\sqrt {5})/2$ which form a $Z$-basis of $\cO_E$. Other values of $A_k$'s  are omitted. Recall that $a^{(j)}$ denote the image of $a\in E$ via the $j$-th embedding to $\RR$. Then on the $1$-st copy of $\CC^2$, the
equation of $F_B$ becomes $u_1=v_1$. There is no intersection with coordinate axes in other copies
of $\CC^2$. Thus $F_B$ intersects the cusp resolution at the origin of the $1$-st copy of
$\CC^2$ which corresponds to the ordinary double point on the cusp resolution. Thus the intersection
number of $F_B$ with the cusp resolution is $2$.  By \cite[Corollary~4.1]{MR930101}, we get
\begin{equation*}
  c_1(X_0^+(\fp)).F_B= 2 \vol (\tilde {\Gamma}_B\lmod \cH) + \sum_x Z_x. F_B
\end{equation*}
where the sum runs over all singularities $x$ of $Y_0^+ (\fp)$ and $Z_x$ denotes the local Chern
cycle of $x$.  The volume
\begin{equation*}
  \vol (\tilde {\Gamma}_B\lmod \cH) = \frac{1}{4}\vol (\Gamma_0^\Q (10)\lmod \cH) = \frac{18}{4}\vol (\SL_2 (\Z)\lmod \cH) = 9\zeta_\Q (-1) =-\frac{3}{4}.
\end{equation*}
Here $\zeta_\Q$ denotes the Riemann zeta function.  The local Chern cycles needed in the computation
can be looked up in Remark~\ref{rmk:local-chern-cycle}.  Thus we find
\begin{equation*}
  c_1(X_0^+(\fp)).F_B= - \frac{3}{2} 
  +\frac{1}{3}\cdot n_3 +\frac{1}{2}\cdot n_4
\end{equation*}
where $n_3$ is the number of $(3;1,1)$-points that $F_B$ passes through and $n_4$ is the number of
$(4;1,1)$-points that $F_B$ passes through.  As intersection numbers are integers, we are force to
have $n_3=0$ and $n_4=1$ and thus $c_1(X_0^+(\fp)).F_B=1$. By the Adjunction formula, $F_B^2=-1$.
We get a linear configuration of non-singular rational curves with self-intersection numbers $-2$,
$-1$, $-2$, where the $(-2)$-curves come from desingularity of the $(3;1,-1)$- and the
$(4;1,-1)$-points mentioned above. After blowing down $F_B$ we acquire two intersecting
$(-1)$-curves and this shows that the surface $X_0^+(\fp)$ is a rational surface by the rationality
criterion\cite [VII.2.2] {MR930101}.

\subsubsection{$D=13$}
We adopt essentially the same notation as in the previous example. Now the quadratic field is
$E=\Q(\sqrt{13})$. The Galois conjugation is denoted by ${}^*$. We will regard $E$ as a subfield of
$\R$. Set $\varepsilon= (3+\sqrt {13})/2$ to be a fundamental unit. The cusp resolution at the
infinity of $\bar {Y}_0^+ (\fp)$ is of a configuration of type $[5,2,2]^\circ$. We label the
non-singular rational curves occurring in the cusp resolution as $S_0$, $S_1$ and $S_2$. Following
the method in \cite{MR0229579}, we can locate all the $\PSL_2(\cO_E)$-inequivalent elliptic
points. We review the process briefly. First we can compute that the $y$-coordinates of an elliptic
fixed point $z= (z_1,z_2)$ in the fundamental domain given as in \cite{MR0229579} satisfies
\begin{equation*}
  1\le \left (\frac{17}{8}\right)^2 +  (y_1y_2)^2 + \frac{13}{16}(y_1y_2),
\end{equation*}
or, in other words,
\begin{equation}\label{eq:bound-Ny}
  y_1y_2 \ge (-17 + 2\sqrt {94})/16 > 0.149.
\end{equation}

Consider the elliptic fixed points of order $2$. Assume it is fixed by the matrix
$\smatrix{a}{b}{c}{d}\in \PSL_2 (\cO_E)$. We may assume that $c<0$. From \eqref{eq:bound-Ny} and
\eqref{eq:ell-pt-coords}, we deduce that $|c c^*|\le 6$. Thus up to a unit $c$ is either $1$,
$4-\sqrt {13}$ or $2$. In the fundamental domain, we have
$\varepsilon^{-2} \le y_1/y_2 < \varepsilon^2$ and $(x_1,x_2)$ lies in the set
\begin{equation*}
  \fP:=\{(u + v\sqrt {13}, u - v\sqrt {13}) | -1/2 < u \le 1/2, -1/4 < v \le 1/4\}.
\end{equation*}
Thus $c$ can take the following values:
\begin{equation}\label{eq:c-vals}
  -1, -\varepsilon, (1-\sqrt {13})/2, (-5-\sqrt {13})/2, -2, -2\varepsilon.
\end{equation}
For each of these values we find all values $a$ and $d$ in $\cO_E$ such that $a+d=0$ and
$((a-d)/2c, (a^*-d^*)/2c^*)$ lies in the set $\fP$. Then we determine the value for $b\in \cO_E$ by
ensuring the determinant is $1$.

Finally we need to check which ones are $\PSL_2 (\cO_E)$-conjugate matrices and keep only one of
those. We summarise the results below. A set of $\PSL_2 (\cO_E)$-inequivalent $(2;1,1)$-points is given by
the fixed point of the following matrices:
\begin{equation*}
  \bmat{0}{1}{-1}{0}, \quad \bmat{0}{-\varepsilon^*}{-\varepsilon}{0}.
\end{equation*}

Next we consider the elliptic fixed points of order $3$. We use the same notation as in the case of
order $2$. Again $c$ can only take the values in \eqref{eq:c-vals}. A similar process produces the
matrices whose fixed points form a set of $\PSL_2 (\cO_E)$-inequivalent points of order $3$. We can
check which ones are $(3;1,1)$-points and which ones are $(3;1,-1)$-points by \eqref{eq:rotation}. A
set of $\PSL_2 (\cO_E)$-inequivalent $(3;1,1)$-points is given by the fixed point of the following matrices:
\begin{equation*}
  \bmat {0} {1} {-1} {1}, \quad \bmat {\varepsilon} {2} {-1-\varepsilon} {1-\varepsilon};
\end{equation*}
a set of $\PSL_2 (\cO_E)$-inequivalent $(3;1,-1)$-points is given by the fixed point of the following
matrices:
\begin{equation*}
  \bmat {-\varepsilon} {2 (\varepsilon-1)} {-\varepsilon} {\varepsilon+1},\quad \bmat {-1} {\varepsilon-1} {\varepsilon^*-1} {2}.
\end{equation*}
% raw data: a=sqrt13 $(3;1,1)$: [[ 0 1] [-1 1]], [[ 1/2*a + 3/2 2] [-1/2*a - 5/2 -1/2*a - 1/2]],
% [[-1/2*a - 1/2 1/2*a + 5/2] [ -2 1/2*a + 3/2]], [[-1/2*a + 1/2 2] [ -2 1/2*a + 1/2]] last three
% conjugate to each other $(3;1,-1)$: [[-1/2*a - 3/2 a + 1] [-1/2*a - 3/2 1/2*a + 5/2]], [[ -1 1/2*a
% + 1/2] [-1/2*a + 1/2 2]], [[-1/2*a - 3/2 1/2*a + 1/2] [ -a - 3 1/2*a + 5/2]], [[ 1/2*a + 5/2 1/2*a
% + 1/2] [ -a - 3 -1/2*a - 3/2]] last three conjugate to each other

It is easy to find the $\Gamma_0^E(\fp)$-inequivalent elliptic points from right coset decomposition
$\PSL_2(\cO_E) = \cup_{\alpha}\Gamma_0^E (\fp)\delta_\alpha \cup \Gamma_0^E (\fp)\delta_\infty$
where $\delta_\alpha=\bigl( \begin{smallmatrix} 1&0\\ \alpha&1
\end{smallmatrix} \bigr)$ with $\alpha\in\cO_E$ running through a set of representatives of
$\cO_E/\fp$ and $\delta_\infty=\bigl( \begin{smallmatrix} 0&1\\ -1&0
\end{smallmatrix} \bigr)$.

\paragraph{$\fp=(4+\sqrt{13})$}
We list the type and one generator of isotropy group for each $\Gamma_0^E(\fp)$-inequivalent
elliptic point:
% Raw data: No (2;1,1).  two (3;1,1): [-1 1] [-3 2], [ 1/2*a - 1/2 2] [ 1/2*a - 5/2 -1/2*a + 3/2]
% two (3;1,-1) [ 1/2*a + 5/2 1/2*a + 3/2] [ -a - 1 -1/2*a - 3/2], [ 2 1/2*a - 1/2] [-1/2*a - 1/2 -1]
\begin{align*}
  (3;1,1),&\bmat {-1} {1} {-3} {2} \quad  &(3;1,1) , &\bmat {1-\varepsilon^*} {2} {-1-\varepsilon^*} {\varepsilon^*}\\
  (3;1,-1),&\bmat {1+\varepsilon} {\varepsilon} {-2 (\varepsilon-1)} {-\varepsilon} \quad  &(3;1,-1),&\bmat {2} {1-\varepsilon^*} {1-\varepsilon} {-1}.
\end{align*}
There is no $(2;1,1)$-point.  Since there cannot be any elliptic points with isotropy group of order
$6$ for $W\Gamma_0^E(\fp)$ acting on $\cH^2$, the Atkin-Lehner operator $w$ must exchange the two
$(3;1,1)$-points (resp. $(3;1,-1)$-points).

Now consider the curve $\tilde {F}_B$ on $W\Gamma_0^E(\fp)\lmod \cH^2$ defined as in \eqref{eq:F_B}
and set $B=\bigl(
\begin{smallmatrix}
  0& 4-\sqrt{13}\\ -4-\sqrt{13}&0
\end{smallmatrix}\bigr)$. Define $F_B$ analogously.
The stabiliser $\tilde {\Gamma}_B$ of $\tilde{F}_B$ in $W\Gamma_0^E(\fp)$ consists of elements of
the form
\begin{equation*}
  \left\{\smat {a} {b} {c} {d}\in \Gamma_0^E(\fp): a, d \in \Z, c\in(4+\sqrt{13})\Z, b\in (4-\sqrt{13})\Z \right\}.
\end{equation*}
Thus we find that $F_B$ is birational to $\Gamma_0^\Q(3\Z)\lmod\cH$ which is of genus $0$. The
Atkin-Lehner operator sends $\tilde {F}_B$ to $\tilde {F}_{B'}$ with $B'=\smat {0} {1} {-1}
{0}$. The latter is obviously point-wise stable under the swapping operator $sw$ which is the
involution on $\bar {Y}_0^+ (\fp)$ induced by swapping coordinates on $\cH^2$:
$(z_1,z_2)\mapsto (z_2,z_1)$. Thus $F_B$ is a non-singular rational curve. We can compute how $F_B$
intersects with the cusp resolution. Following the notation of \cite[V.2]{MR930101}, we have
$A_{-1}= (5+\sqrt {13})/2$, $A_0=1$, $A_1= (5-\sqrt {13})/2$ and $A_2=4-\sqrt {13}$. Then $F_B$ has
local equation $u_2 =1$ on the $2$-nd copy of $\CC^2$ and $F_{B'}$ has local equation $u_0=1$ in the
$0$-th copy of $\CC^2$. Thus the intersection number of $F_B$ with the cusp resolution is $2$.

As before we have
\begin{equation}
  c_1(X_0^+(\fp)).F_B= 2\vol(F_B') + \sum Z_x.F_B
\end{equation}
where $Z_x$ is the local Chern cycle at a singular point $x$. Thus we get
\begin{align*}
  c_1(X_0^+(\fp)).F_B &= 2 \vol (\Gamma_0^\Q(3\Z)\lmod\cH) + 2 + \frac{1}{3}\cdot  n_3 \\
                      &= - \frac{4}{3} +2 + \frac{1}{3}\cdot  n_3
\end{align*}
with $n_3$ the number of $(3;1,1)$-points that $F_B$ passes through. As there is just one
$(3;1,1)$-point, we are forced to have $n_3=1$ and thus $c_1(X_0^+(\fp)).F_B=1$. By Adjunction
formula $F_B^2=-1$. Thus we get a linear configuration of $[-2,-2,-1,-3]$ where the $(-2)$-curves
are $S_1$ and $S_2$ from the cusp resolution, the $(-1)$-curve is $F_B$ and the $(-3)$-curve is from the
resolution of singularity of the $(3;1,1)$-point. After blowing down $F_B$ and $S_2$ consecutively,
we get two intersecting $(-1)$-curves. Again by the rationality criterion\cite [VII.2.2] {MR930101},
we conclude that $W\Gamma_0^E((4+\sqrt{13}))\lmod \cH^2$ is a rational surface.

\section*{Acknowledgement}
\label{sec:acknowledgement}

I would like thank my thesis adviser, Professor Shou-Wu Zhang, for all the encouragement and
discouragement during the preparation of this manuscript.

\bibliographystyle{mrl}

\end{document}